\date{}
\renewcommand{\uppercasenonmath}[1]{}
\theoremstyle{plain}
\newtheorem{theorem}{Theorem}[section]
\newtheorem{proposition}[theorem]{Proposition}
\newtheorem{corollary}[theorem]{Corollary}
\newtheorem*{open question}{Open Question}
\newtheorem{definition}[theorem]{Definition}
\newcommand{\C}{\boldsymbol{{C}}}
\newcommand{\K}{\boldsymbol{{K}}}
\newcommand{\D}{\boldsymbol{{D}}}
\theoremstyle{definition}
\newtheorem*{acknowledgement}{Acknowledgement}
\theoremstyle{remark}
\newtheorem{remark}[theorem]{Remark}
\newcommand{\Ex}{\mathcal{E}}
\newcommand{\Proj}{\mathcal{P}}
\newcommand{\PP}{\mathcal{PP}}
\newcommand{\Id}{\mathrm{Id}}
\def\next{n{\rm \mbox{-}Ext}}
\def\fPD{{\rm fPD}}
\def\Mod{{\rm Mod}}
\def\fPD{{\rm fPD}}
\def\Hom{{\rm Hom}}
\def\RHom{{\rm \textbf{R}Hom}}
\def\Ext{{\rm Ext}}
\def\Ker{{\rm Ker}}
\def\Perf{{\rm Perf}}
\def\Coker{{\rm Coker}}
\def\pd{{\rm pd}}
\def\id{{\rm id}}
\def\H{{\rm H}}
\def\fP{\mathcal{P}}
\def\fI{\mathcal{I}}
\def\Cone{{\rm Cone}}
\def\gldim{{\rm gldim}}
\def\supn{\boldsymbol{sup_n}}
\def\infn{\boldsymbol{inf_n}}
\begin{document}
	\begin{center}
		{\large  \bf On the generalizations of global dimensions and singularity categories\footnote{{Key Words:} $n$-singularity category; $n$-global dimension;  recollement.\\
				{2020 MSC:}  16E35, 16E10.}}
		
		\vspace{0.5cm}   Xiaolei Zhang$^{a}$,~ Tiwei Zhao$^{b}$,~ Dingguo Wang$^{b}$
		%\bigskip

		{\footnotesize  a.\ School of Mathematics and Statistics, Shandong University of Technology, Zibo 255049, P.R. China\\
			b.\ School of Mathematical Sciences, Qufu Normal University, Qufu 273165, P.R. China
			
			E-mail: zxlrghj@163.com, tiweizhao@qfnu.edu.cn, dgwang@qfnu.edu.cn\\
		}
	\end{center}
	%\begin{figure}[b]
	%\rule[-2.5truemm]{5cm}{0.1truemm}\\[2mm]
	%{\small }
	%\end{figure}
	
	%\begin{figure}[b]
	%\rule[-2.5truemm]{5cm}{0.1truemm}\\[2mm]
	%{\small }
	%\end{figure}
	\bigskip
	\centerline { \bf  Abstract}
	\bigskip
	\leftskip10truemm \rightskip10truemm \noindent
	
	For each $n\in\mathbb{N}\cup\{\infty\}$, we introduce the notion of $n$-singularity category $\D_{n\mbox{-}sg}(R)$ of a given ring $R$, which can be seen as a generalization of the classical singularity category. Moreover, the $n$-global dimension $n\mbox{-}\gldim(R)$ of  $R$ is investigated. We show that  $\D_{n\mbox{-}sg}(R)=0$ if and only if $n\mbox{-}\gldim(R)$ is finite.  Furthermore, we characterize the vanishing property of  $n$-singularity categories in terms of recollements.
	\vbox to 0.3cm{}\\
	
	\bigskip

	\leftskip0truemm \rightskip0truemm
	\bigskip
	%\section { \bf Introduction    }
	%\bigskip
	
	\section{Introduction}
	
	In this paper, $R$ always is an associative ring with identity and $R$-Mod is the category of all left $R$-modules. All $R$-modules are left $R$-modules unless otherwise stated.  Throughout this paper,  we always fix $n$ to be a non-negative integer or $\infty$. Besides, we always identify $\infty+1$  with $\infty$.
	
	For a left Noetherian ring $R$, Buchweitz introduced the notion of
	stable derived
	categories $\D_{sg}(R):=\D^{b}(R)/\Perf(R)$ the Verdier quotient of the bounded derived category of $R$ by its full subcategory of perfect complexes (see \cite{Buchweitz}). Note that $\D_{sg}(R) = 0$ if and only if $R$ has finite global dimension. So $\D_{sg}(R)$ reflects homological singularity of the ring $R$.
	Moreover, Orlov \cite{Orlov} considered the stable derived category  of all coherent sheaves over an algebraic variety $\mathcal{X}$ under the name singularity categories, since he showed that  $\D_{sg}(\mathcal{X})=0$ if and only if $\mathcal{X}$ is smooth. The singularity categories on general rings  are introduced by  Beligiannis \cite{Beligiannis}.  Let $R$ be a ring and $\D^{b}(R)$ the bounded derived category of $R$-modules. Then $\D^{b}(R)\cong \K^{-,b}(\Proj)$ the bounded  above  homotopy category of all projective modules, where $\Proj$ denotes the full subcategory of all projective modules. Beligiannis defined the singularity categories of $R$ to be
	$$\D_{sg}(R):=\D^{b}(R)/\K^{b}(\Proj)\cong \K^{-,b}(\Proj)/\K^{b}(\Proj),$$the Verdier quotient of $\D^{b}(R)$ (or $\K^{-,b}(\Proj)$)  by the bounded  homotopy category of all projective modules. Note that the Buchweitz's
	stable derived
	categories and Beligiannis's singularity category coincide on the category of all $R$-modules.

	It is well known that in the sense of Neeman \cite{Neeman}, the derived category $\D^b(R)$ is the derived category of
	the exact category $(R$-$\mathbb{\Mod}, \mathscr{E})$, where $\mathscr{E}$ is the collection of all short exact sequences
	of $R$-modules.  Zheng and Huang \cite{Zheng} introduced the pure derived category $\D^b_{pur}(R)$ as the derived category of the exact category
	$(R$-$\mathbb{\Mod}, \mathscr{E}_{pur})$, where $\mathscr{E}_{pur}$ is the collection of all  short pure-exact sequences of $R$-modules. Latter, for a ring $R$, Cao and Ren \cite{Cao} defined the pure singularity category  $\D^b_{psg}(R)=\D^b_{pur}(R)/\K^{b}(\PP)$ where $\K^{b}(\PP)$ denotes the bounded  homotopy category of all pure-projective modules. They showed $\D^b_{psg}(R)=0$ if and only if the pure-global dimension of $R$ is finite, and also characterized the vanishing properties of  pure singularity categories in terms of recollements.
	Recently, Tan, Wang and Zhao \cite{Tan} and  Zhang \cite{Zcx},  respectively, generalized the above results to $n$-pure derived category $\D^b_{n\mbox{\tiny{-pur}}}(R)$ and $n$-pure singularity categories $\D^b_{\tiny{P}_nsg}(R)$.
	
	To extend the classical derived category to a more general setting. The authors \cite{ZZW} introduced and studied the notion of $n$-derived categories $\D^b_{n}(R)$, which the derived category of
	the exact category $(R$-$\mathbb{\Mod}, \mathscr{E}_n)$, where $\mathscr{E}_n$ is the collection of all short $n$-exact sequences
	of $R$-modules. Note that the $0$-derived category $\D^b_{0}(R)$ is precisely the classical derived category $\D^b(R)$. The main motivation of this paper is to generalize the classical singularity categories, and investigate the $n$-singularity categories $\D^b_{n\mbox{-}sg}(R)$ which is the Verdier quotient of  $\D^b_{n}(R)$ by $\K^b(\fP_n)$ the bounded homotopy category of all $n$-projective $R$-modules. The paper is organized as follows. In Section 2, we give a quick review on $n$-projective ($n$-injective) modules, $n$-exact sequences and $n$-derived categories.  In Section 3, we investigate the relative homological dimensions, that is,  $n$-projective dimensions  and $n$-injective dimensions of complexes, and the $n$-global dimension $n\mbox{-}\gldim(R)$ of a given ring $R$. In Section 4, we introduce and study the $n$-singularity categories. We obtain that for a ring $R$, $\D^b_{n\mbox{-}sg}(R) = 0$ if and only if $n\mbox{-}\gldim(R)$ is finite. Moreover, we show that for any rings $R$, $S$ and $T,$ if
	$\D^b_{n}(R)$ admits a recollement${\xymatrix{\D^b_{n}(S)\tiny{\ar[rr]!R|{}}&&{\ar@<-1ex>[ll]!R|{}\ar@<1ex>[ll]!R|{}}\D^b_{n}(R){\ar[rr]!L|{}}&&{\ar@<-1ex>[ll]!L|{}\ar@<1ex>[ll]!L|{}}\D^b_{n}(T)}},$ then $\D^b_{n\mbox{-}sg}(R) = 0$ if and only if $\D^b_{n\mbox{-}sg}(S) = 0 = \D^b_{n\mbox{-}sg}(T)$.

	%Tan, Wang and Zhao \cite{Tan} introduced the $n$-derived category $\D^b_{n\mbox{\tiny{-pur}}}(R)$ by considering all $n$-short exact sequences of $R$-modules. Zhang \cite{Zcx} recently defined the $n$-singularity categories $\D^b_{\tiny{P}_nsg}(R)$ to be the Verdier quotient of $\D^b_{n\mbox{\tiny{-pur}}}(R)$ by $\K^{b}(\PP_n)$ the bounded  homotopy category of all $n$-pure-projective modules. Furthermore, some generalization of  Zheng et. al\cite{Zheng} and Cao et. al \cite{Cao} are obtained in Tan et. al\cite{Tan} and Zhang \cite{Zcx}. 

	\section{Preliminaries}
	We give a quick review on $n$-projective  modules, $n$-injective modules, $n$-exact sequences and $n$-derived categories. For more details, refer to \cite{ZZW}.
	
	Let $R$ be a ring and  $M$  a left $R$-module. We say $M$ has a \emph{finite projective resolution} $($of length at most $n)$, if there is a finite exact sequence
	$$0\rightarrow P_n\rightarrow P_{n-1}\rightarrow \dots\rightarrow P_1\rightarrow P_0\rightarrow M\rightarrow 0$$ where each $P_i$ is a finitely generated projective left $R$-module. The class of all $R$-modules with finite projective resolutions $($resp., of length less than $n)$ is denoted by $\Proj^{< \infty}$ (resp., $\Proj^{< n}$).
	
	Recall from \cite{ZZW}  that a short exact sequence of left $R$-modules $$0\rightarrow A\rightarrow B\rightarrow C\rightarrow 0$$ is said to be \emph{$n$-exact} provided that $$0\rightarrow \Hom_R(M,A)\rightarrow \Hom_R(M,B)\rightarrow \Hom_R(M,C)\rightarrow 0$$ is exact for any $M\in \Proj^{<n+1}$.
	If we denote by  $\mathscr{E}_{n}$ to be the class of all $n$-exact sequences, then $\mathscr{E}_{n}$ is an  exact structure of $R$-Mod in the sense of Quillen \cite{Q73}. If we denote $\mathscr{E}_{fp}$ to be the class of all classical pure exact sequences, then we have the following inclusions:
	$$\mathscr{E}_{fp}\subseteq \mathscr{E}_{\infty}\subseteq\cdots\subseteq\mathscr{E}_{{n+1}}\subseteq \mathscr{E}_{n} \subseteq\cdots\subseteq \mathscr{E}_{{1}}\subseteq \mathscr{E}_{{0}}.$$

	Recall  from \cite{ZZW}  that an $R$-module $M$ is said to be \emph{$n$-projective}
	if $M$ is projective with respect to all $n$-exact sequences, i.e.,  for any $n$-exact sequence $ 0\rightarrow A\rightarrow B\rightarrow C\rightarrow 0$, the natural exact sequence $$0\rightarrow \Hom_R(M, A)\rightarrow \Hom_R(M,B)\rightarrow \Hom_R(M,C)\rightarrow 0$$ is exact. The notion of  \emph{$n$-injective modules} can be defined dually.
	
	We always denote by $\fP_n$ (resp., $\fI_n$) class of $n$-projective (resp., $n$-injective) $R$-modules. Trivially,
	
	$$\fP_0\subseteq  \fP_1\subseteq \cdots\subseteq \fP_n\subseteq \cdots\subseteq \fP_\infty,$$
	$$\fI_0\subseteq  \fI_1\subseteq \cdots\subseteq \fI_n\subseteq \cdots\subseteq \fI_\infty.$$

	\begin{remark}\label{str-fpr} The following statements hold.
		\begin{enumerate}
			\item  $0$-projective modules are exactly  projective modules and $0$-injective modules are exactly  injective modules. 
			\item  If $n\geq 1$, then the class of $n$-projective (resp., $n$-injective) modules is not closed under extensions in general.  Indeed, let $R$ be the Kronecker algebra. Then $R$ is an Artin ring with global dimension equal to $1$. So the class of  $n$-projective (resp., $n$-injective) modules is exactly that of pure projective (resp., pure injective) modules. However, the class of pure projective (resp., pure injective) modules is not  closed under extensions in general (see \cite[Example 8.1.18]{Prest2}, \cite{O92} and \cite{O02} for more details). 
			\item 	If $n\geq 1$, then the class of $n$-projective (resp., $n$-injective) modules is not closed under the kernel of epimorphisms (the cokernel of monomorphisms) in general. Indeed, let $R$ be a  finite-dimensional hereditary algebra of infinite represented type.  So pure projective (resp., pure injective) modules are precisely $n$-projective (resp., $n$-injective). 
			Note that the hereditary algebra  $R$ is not pure semisimple. So there are $R$-modules which are not pure-projective and $R$-modules which are not pure-injective.  However, each projective $R$-module (resp., injective $R$-module), being a direct sum of finite-dimensional modules, is  pure-injective (resp., pure-projective).  So the minimal injective coresolution of non-pure-projective modules (resp., minimal projective resolution of non-pure-injective modules) gives the counterexamples.
		\end{enumerate}
	\end{remark}

	It is well-known that every $R$-module has a pure-injective envelope and pure-projective precover. Moreover, it follows by \cite[Corollary 2.3]{S21} that every $R$-module has a pure   projective cover if and only if every direct limit of pure   projective $R$-modules is pure   projective, if and only if $R$ is a pure semisimple ring (i.e., every $R$-modules is pure   projective).
	Recall from \cite{ZZW} that  every $R$-module has an $n$-envelope and a $n$-precover. Moreover, every $R$-module has a 
	$n$-projective cover if and only if every direct limit of 
	$n$-projective $R$-modules is 
	$n$-projective, if and only if the $R$-module $\bigoplus\limits_{P\in [\Proj^{<n+1}]}P$  has  perfect decompositions, where $[\Proj^{<n+1}]$ denotes the representatives of  isomorphism classes of modules in $\Proj^{<n+1}$.
	\begin{remark}\label{ppr-M}
		It follows by the above  that every $R$-module admits  $n$-projective resolutions and  $n$-injective coresolutions, that is, for each $M \in R$-\Mod,
		there exists an exact sequence
		$$\cdots \rightarrow P_1 \rightarrow P_0 \rightarrow M \rightarrow 0$$
		with each $P_i\in \fP_n $ and it remains exact after applying $\Hom_R(P, -)$ for any $P\in\fP_n,$
		and there exists an exact sequence
		$$0\rightarrow M\rightarrow E_0\rightarrow E_1 \rightarrow\cdots $$
		with each $E_i \in \fI_n $ and it remains exact after applying $\Hom_R(-, E)$ for any $E\in \fI_n.$
	\end{remark}

	We denote by $\C(R)$ and $\K(R)$ the category of complexes of $R$-Mod and homotopy category of $R$-Mod, respectively. For any $X^{\bullet}\in \C(R)$, we write
	$$X^{\bullet}:=\cdots\rightarrow X^{i-1}\xrightarrow{d_{X^{\bullet}}^{i-1}}X^{i}\xrightarrow{d_{X^{\bullet}}^{i}} X^{i+1}\xrightarrow{d_{X^{\bullet}}^{i+1}} X^{i+2}\rightarrow\cdots$$
	Let $X^{\bullet}\in \C(R)$. If $X^{i}=0$ for $i\gg 0$, then $X^{\bullet}$ is said to be bounded above; if $X^{i}=0$ for $i\ll 0$, then $X^{\bullet}$ is said to be bounded below;  if it is both bounded above and  bounded below, then $X^{\bullet}$ is said to be bounded.

	Recall from \cite{ZZW} that an exact complex $X^{\bullet}$ is said to be $n$-exact at $i$  if  the sequence $0\rightarrow \Ker(d_{X^{\bullet}}^{i})\rightarrow X^{i}\rightarrow \Coker(d_{X^{\bullet}}^{i-1})\rightarrow 0$ is $n$-exact.  A complex $X^{\bullet}$ is said to be $n$-exact if it is $n$-exact at $i$ for any   $i\in \mathbb{Z}$. The class of all $n$-exact sequences is denoted by $\Ex_n$. Let $f:X^{\bullet}\rightarrow Y^{\bullet}$ be a cochain map of complexes. We say $f$ is an  $n$-  quasi-isomorphism provided that its mapping cone $\Cone^{\bullet}(f)$ is an $n$-exact complex.

	The left little finitistic dimension of a ring $R$, denoted by $l.\fPD(R)$, is defined to be the supremum of the projective dimensions of all left $R$-modules in $\Proj^{<\infty}$.
	% i.e., there is a finite exact sequence$$0\rightarrow P_n\rightarrow P_{n-1}\rightarrow \dots\rightarrow P_1\rightarrow P_0\rightarrow M\rightarrow 0$$ where each $P_i$ is a finitely generated projective left $R$-module.
	Clearly, $l.\fPD(R)\leq n$ if and only if $\Proj^{<\infty}=\Proj^{< n+1}$. Using the notations as above, we have the following characterizations of little finitistic dimension $l.\fPD(R)$ of a given ring $R$.
	
	\begin{theorem} \cite{ZZW} \label{fpd-exact-seq} Let $R$ be a ring. Then the following statements are equivalent.
		\begin{enumerate}
			\item $l.\fPD(R)\leq n$.
			\item $\mathscr{E}_{n}=\mathscr{E}_{{n+1}}\ ($or $ \fP_n= \fP_{n+1}$,  or $\Ex_n=\Ex_{n+1})$.
			\item $\mathscr{E}_{n}=\mathscr{E}_{{m}} ($or $ \fP_n= \fP_{m}$, or $\Ex_n=\Ex_{m})$ for some $m>n$.
			\item $\mathscr{E}_{n}=\mathscr{E}_{{m}} ($or $ \fP_n= \fP_{m}$,  or $\Ex_n=\Ex_{m})$ for any $m>n$.
			\item $\mathscr{E}_{n}=\mathscr{E}_{\infty} ($or $ \fP_n= \fP_{\infty}$,  or $\Ex_n=\Ex_{\infty})$.
		\end{enumerate}
	If $R$ is commutative, all above are equivalent to 
	
	$\textit{(6)}$ $ \fI_n= \fI_{n+1},$ ( or $ \fI_n= \fI_{m}$ for some $m>n$, or $ \fI_n= \fI_{m}$  for any $m>n$, or $ \fI_n= \fI_{\infty}$.)

	\end{theorem}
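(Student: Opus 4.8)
The plan is to prove $(1)\Rightarrow(5)\Rightarrow(4)\Rightarrow(3)\Rightarrow(2)\Rightarrow(1)$, having first reduced, inside each of $(2)$--$(5)$, the three parenthetical formulations to one another. For that reduction, regard a short exact sequence $0\ra A\ra B\ra C\ra 0$ as a complex concentrated in three consecutive degrees: it is $n$-exact as a complex exactly when it lies in $\mathscr{E}_n$, and conversely a complex is $n$-exact iff each of its canonical short exact sequences $0\ra\Ker d^i\ra X^i\ra\Coker d^{i-1}\ra 0$ lies in $\mathscr{E}_n$; hence $\mathscr{E}_n=\mathscr{E}_m\Leftrightarrow\Ex_n=\Ex_m$. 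Next, $\mathscr{E}_n=\mathscr{E}_m$ forces $\fP_n=\fP_m$, since these are the projective objects of one and the same exact category; and conversely, because $\Proj^{<n+1}\subseteq\fP_n$ and every $n$-projective module is by definition projective relative to every sequence in $\mathscr{E}_n$, one has $\mathscr{E}_n=\{\,\eta : \Hom_R(P,-)\ \text{is exact on}\ \eta\ \text{for every}\ P\in\fP_n\,\}$, so $\fP_n=\fP_m$ forces $\mathscr{E}_n=\mathscr{E}_m$. Thus it suffices to work with the $\mathscr{E}$- and $\fP$-versions.

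For $(1)\Rightarrow(5)$: by the remark immediately preceding the theorem, $l.\fPD(R)\le n$ is equivalent to $\Proj^{<\infty}=\Proj^{<n+1}$; since $\mathscr{E}_\infty$ is defined by testing $\Hom_R(M,-)$-exactness against all $M\in\Proj^{<\infty}$ and $\mathscr{E}_n$ against all $M\in\Proj^{<n+1}$, the coincidence of these test classes gives $\mathscr{E}_\infty=\mathscr{E}_n$. The implications $(5)\Rightarrow(4)\Rightarrow(3)\Rightarrow(2)$ are then pure sandwiching in the chain $\mathscr{E}_\infty\subseteq\cdots\subseteq\mathscr{E}_{m}\subseteq\cdots\subseteq\mathscr{E}_{n+1}\subseteq\mathscr{E}_n$ from Section~2: every $\mathscr{E}_m$ with $m>n$ lies between $\mathscr{E}_\infty$ and $\mathscr{E}_n$, so as soon as $\mathscr{E}_n$ coincides with any one of $\mathscr{E}_\infty,\dots,\mathscr{E}_{n+1}$ it coincides with all of them.

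The heart of the matter is $(2)\Rightarrow(1)$, for which I would use the $\fP$-formulation together with the description $\fP_m=\Add(\Proj^{<m+1})$, valid for every $m\in\mathbb{N}\cup\{\infty\}$ (with $\Proj^{<\infty+1}:=\Proj^{<\infty}$): the inclusion $\Add(\Proj^{<m+1})\subseteq\fP_m$ is formal, and the reverse uses that $(R\mbox{-}\Mod,\mathscr{E}_m)$ has enough projectives provided by direct sums of modules from $\Proj^{<m+1}$, so that any $P\in\fP_m$ is a retract of such a sum along an $m$-exact epimorphism. Granting this, assume $\fP_n=\fP_{n+1}$ and take $M\in\Proj^{<n+2}$. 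Then $M\in\fP_{n+1}=\fP_n=\Add(\Proj^{<n+1})$, and since $M$ is finitely generated it is a direct summand of a finite direct sum of modules in $\Proj^{<n+1}$, whence $\pd_R M\le n$. Choosing a finite projective resolution $0\ra P_{n+1}\ra P_n\ra\cdots\ra P_0\ra M\ra 0$ by finitely generated projectives, the hypothesis $\pd_R M\le n$ makes the $n$-th syzygy $\coker(P_{n+1}\ra P_n)$ a finitely generated projective module, so $M\in\Proj^{<n+1}$. Thus $\Proj^{<n+2}=\Proj^{<n+1}$, and a straightforward syzygy induction (if $N\in\Proj^{<m+1}$ then $\Omega N\in\Proj^{<m}$) propagates this to $\Proj^{<m}=\Proj^{<n+1}$ for all $m\ge n+1$, i.e. $\Proj^{<\infty}=\Proj^{<n+1}$, i.e. $l.\fPD(R)\le n$. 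The main obstacle is exactly the structural identity $\fP_m=\Add(\Proj^{<m+1})$ — equivalently, that $\mathscr{E}_m$ has enough projectives witnessed by $\Proj^{<m+1}$ — which rests on the finitely presented nature of the modules in $\Proj^{<m+1}$ and is the $n$-analogue of the classical description of pure-projective modules; I would take it from \cite{ZZW}.

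Finally, for commutative $R$ the equivalence with $(6)$ I would obtain dually: the exact structure $\mathscr{E}_n$ also has enough injectives (every module admits an $n$-injective coresolution), and over a commutative ring one transfers the projective-side argument to the injective side, running the dual bookkeeping with injective coresolutions in place of projective resolutions and using a character-module style duality to control finiteness, to conclude that $\mathscr{E}_n$ is recovered from $\fI_n$ as well; hence $\fI_n=\fI_m\Leftrightarrow\mathscr{E}_n=\mathscr{E}_m$. Commutativity is what licenses this last step: in the noncommutative setting the injective objects of $\mathscr{E}_n$ are governed by data on the opposite ring, over which the little finitistic dimension can differ.
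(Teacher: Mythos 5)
This theorem is quoted verbatim from \cite{ZZW}; the present paper contains no proof of it, so there is no internal argument to compare yours against and I can only assess the proposal on its own terms. For items (1)--(5) it is essentially correct. The mutual reduction of the three parenthetical formulations works: a short exact sequence, padded by zeros, is an exact three-term complex whose only nontrivial canonical factorization $0\ra\Ker d^i\ra X^i\ra\Coker d^{i-1}\ra 0$ is the sequence itself, so $\mathscr{E}_n=\mathscr{E}_m\Leftrightarrow\Ex_n=\Ex_m$; and since $\Proj^{<n+1}\subseteq\fP_n$, the class $\mathscr{E}_n$ is recovered from $\fP_n$ as the set of sequences on which every $P\in\fP_n$ is relatively projective, whence $\fP_n=\fP_m\Leftrightarrow\mathscr{E}_n=\mathscr{E}_m$. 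The cycle $(1)\Rightarrow(5)\Rightarrow(4)\Rightarrow(3)\Rightarrow(2)$ is indeed pure sandwiching in the chain $\mathscr{E}_\infty\subseteq\cdots\subseteq\mathscr{E}_{n+1}\subseteq\mathscr{E}_n$. For $(2)\Rightarrow(1)$, the identity $\fP_m=\Add(\Proj^{<m+1})$ is the right crux but is less of an obstacle than you make it: since $R\in\Proj^{<m+1}$, the canonical map $\bigoplus_{Q\in[\Proj^{<m+1}],\,f\in\Hom_R(Q,N)}Q\ra N$ is surjective and is by construction an $\mathscr{E}_m$-epimorphism, so any $N\in\fP_m$ splits off such a direct sum. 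Your finite-generation argument then correctly gives $\pd_RM\leq n$ for $M\in\Proj^{<n+2}$, and the syzygy argument yields $\Proj^{<n+2}=\Proj^{<n+1}$ and, by induction, $l.\fPD(R)\leq n$.

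The genuine gap is item (6). The implication from (2) to (6) is immediate because $\fI_n$ is defined directly from $\mathscr{E}_n$, but the converse --- that $\fI_n=\fI_{n+1}$ already forces $l.\fPD(R)\leq n$ --- is where all the content lies, and your proposal replaces it with ``running the dual bookkeeping \dots{} using a character-module style duality.'' Unlike the projective side, there is no formal dualization available here: to recover $\mathscr{E}_n$ from $\fI_n$ one must show that $\fI_n$ \emph{cogenerates} the exact structure, i.e.\ that a short exact sequence on which every $n$-injective is relatively injective already lies in $\mathscr{E}_n$, and this does not follow merely from the existence of $n$-injective coresolutions. The natural route is to prove that character modules $N^{+}$ are $n$-injective and that membership in $\mathscr{E}_n$ can be tested by tensoring against duals or transposes of modules in $\Proj^{<n+1}$; those duals live over $R^{op}$, which is precisely where commutativity enters --- your closing sentence shows you see this, but none of it is actually carried out or even precisely formulated. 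As written, (6) is asserted rather than proved; you should either supply this duality argument in detail or, as the paper itself does for the whole theorem, cite it from \cite{ZZW}.
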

	Recall the notion of $n$-derived category of a given ring $R$  from \cite{ZZW}. 
	For any $\ast\in \{\mbox{blank},+,-,b\}$,
	set
	\begin{center}
		$\K^{\ast}_{n}(R)=\{X^{\bullet}\in \K^{\ast}(R)\mid  X^{\bullet}$  is an $n$-exact complex$\}$.
	\end{center}
	Then
	$\K^{\ast}_{n}(R)$ is a  thick subcategory of $\K^{\ast}(R)$.   The Verdier quotient
	$$\D^{\ast}_{n}(R):=\K(R)/\K_{n}(R)$$
	is said to be the \emph{$n$-derived category}  of $R$. Trivially, $0$-derived category is precisely the classical derived category.

	Note that $\K^{\ast}_{m}(R)$ is also a  thick subcategory of $\K^{\ast}_{n}(R)$ for any $m>n$. Set $\K^{\ast}_{n,m}(R)=\K^{\ast}_{n}(R)/\K^{\ast}_{m}(R)$ the  Verdier quotient of $\K^{\ast}_{n}(R)$ by $\K^{\ast}_{m}(R)$. Then $\K^{\ast}_{n,m}(R)$ is a  thick subcategory of $\D^{\ast}_{n}(R)$ for any $m>n$. The left little finitistic dimensions can also be characterized in terms of $n$-derived categories.
	
	\begin{theorem} \cite{ZZW} \label{fpd-derived} Let $R$ be a ring. Then for any $\ast\in \{\mbox{blank},+,-,b\}$, there is a  triangulated equivalence for each $n$:
		$$\D^{\ast}_{n}(R)\cong \D^{\ast}_{m}(R)/\K^{\ast}_{n,m}(R).$$
		
		Moreover, the following statements are equivalents:
		\begin{enumerate}
			\item $l.\fPD(R)\leq n$.
			\item $\K^{\ast}_{n,m}(R)=0$ for any $m>n$.
			\item $\K^{\ast}_{n,m}(R)=0$ for some $m>n$.
			\item $\D^{\ast}_{m}(R)$ is naturally isomorphic to $\D^{\ast}_{n}(R)$ for any $m>n$.
			\item $\D^{\ast}_{m}(R)$ is naturally isomorphic to $\D^{\ast}_{n}(R)$ for some $m>n$.
		\end{enumerate}
	\end{theorem}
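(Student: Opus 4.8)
The plan is to deduce the triangulated equivalence from a standard property of iterated Verdier quotients, and then to reduce the listed equivalent conditions to that equivalence together with the module-theoretic characterization of $l.\fPD(R)$ in Theorem~\ref{fpd-exact-seq}. First I would record that, since $\mathscr{E}_{m}\subseteq\mathscr{E}_{n}$ for $m>n$, every $m$-exact complex is $n$-exact, so $\K^{\ast}_{m}(R)\subseteq\K^{\ast}_{n}(R)\subseteq\K^{\ast}(R)$ is a chain of thick subcategories. The equivalence $\D^{\ast}_{n}(R)\cong\D^{\ast}_{m}(R)/\K^{\ast}_{n,m}(R)$ is then an instance of the general fact that for thick subcategories $\mathcal{B}\subseteq\mathcal{A}\subseteq\mathcal{T}$ the canonical functor induces a triangle equivalence $\mathcal{T}/\mathcal{A}\cong(\mathcal{T}/\mathcal{B})/(\mathcal{A}/\mathcal{B})$, applied with $\mathcal{T}=\K^{\ast}(R)$, $\mathcal{A}=\K^{\ast}_{n}(R)$ and $\mathcal{B}=\K^{\ast}_{m}(R)$. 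Under this identification the equivalence is realized by the natural quotient functor $\Phi\colon\D^{\ast}_{m}(R)\to\D^{\ast}_{n}(R)$, which exists precisely because $\K^{\ast}_{m}(R)\subseteq\K^{\ast}_{n}(R)$, and which is nothing but the Verdier quotient of $\D^{\ast}_{m}(R)$ by $\K^{\ast}_{n,m}(R)$.

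The heart of the argument is the equivalence
\[
\K^{\ast}_{n}(R)=\K^{\ast}_{m}(R)\quad\Longleftrightarrow\quad\mathscr{E}_{n}=\mathscr{E}_{m}.
\]
For ``$\Leftarrow$'' I would use that a complex is $n$-exact exactly when each short exact sequence $0\to\Ker(d^{i})\to X^{i}\to\Coker(d^{i-1})\to0$ is $n$-exact, so $\mathscr{E}_{n}=\mathscr{E}_{m}$ forces $\Ex_{n}=\Ex_{m}$, hence $\K^{\ast}_{n}(R)=\K^{\ast}_{m}(R)$. For ``$\Rightarrow$'' I would take any $n$-exact sequence $0\to A\to B\to C\to0$ and form the bounded complex $X^{\bullet}$ with $X^{0}=A$, $X^{1}=B$, $X^{2}=C$ and the given differentials; a direct check shows $X^{\bullet}$ is $n$-exact, so $X^{\bullet}\in\K^{b}_{n}(R)\subseteq\K^{\ast}_{n}(R)=\K^{\ast}_{m}(R)$, forcing $0\to A\to B\to C\to0$ to be $m$-exact, and combined with $\mathscr{E}_{m}\subseteq\mathscr{E}_{n}$ this gives $\mathscr{E}_{n}=\mathscr{E}_{m}$.

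The listed equivalences then follow formally. For (1)$\Leftrightarrow$(2)$\Leftrightarrow$(3), Theorem~\ref{fpd-exact-seq} says $l.\fPD(R)\le n$ iff $\mathscr{E}_{n}=\mathscr{E}_{m}$ for any (equivalently, for some) $m>n$, which by the above holds iff $\K^{\ast}_{n}(R)=\K^{\ast}_{m}(R)$, i.e. iff $\K^{\ast}_{n,m}(R)=\K^{\ast}_{n}(R)/\K^{\ast}_{m}(R)=0$ --- here I would invoke that a Verdier quotient of a triangulated category by a thick subcategory vanishes precisely when that subcategory is the whole category, the kernel of the quotient functor being exactly the thick subcategory. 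For (2)$\Rightarrow$(4)$\Rightarrow$(5)$\Rightarrow$(3): if $\K^{\ast}_{n,m}(R)=0$ then $\Phi$ is the Verdier quotient by the zero subcategory, hence a triangle equivalence, so $\D^{\ast}_{m}(R)$ is naturally isomorphic to $\D^{\ast}_{n}(R)$ for all $m>n$; (4)$\Rightarrow$(5) is trivial; and if $\Phi\colon\D^{\ast}_{m}(R)\to\D^{\ast}_{n}(R)$ is an equivalence for some $m>n$, then since $\Phi$ is the Verdier quotient by $\K^{\ast}_{n,m}(R)$ and a quotient functor by a nonzero thick subcategory $\mathcal{S}$ kills the nonzero morphism $\mathrm{id}_{X}$ for any $0\ne X\in\mathcal{S}$ (so is not faithful), we must have $\K^{\ast}_{n,m}(R)=0$.

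The hard part will be the key lemma --- making the passage between the exact-structure condition $\mathscr{E}_{n}=\mathscr{E}_{m}$ and the homotopy-category condition $\K^{\ast}_{n}(R)=\K^{\ast}_{m}(R)$ fully precise for each admissible $\ast$, and carefully checking that the auxiliary complex $X^{\bullet}$ is $n$-exact at every degree; the remaining steps are routine bookkeeping with Verdier localization.
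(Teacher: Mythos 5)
Your argument is correct. Note that the paper itself gives no proof of this statement---it is quoted from \cite{ZZW} as a preliminary---so there is nothing internal to compare against; your route (the third isomorphism theorem for Verdier quotients applied to the chain of thick subcategories $\K^{\ast}_{m}(R)\subseteq\K^{\ast}_{n}(R)\subseteq\K^{\ast}(R)$, plus the observation that the quotient by a thick subcategory vanishes iff that subcategory is everything) is the standard and expected one. The only simplification available is that your ``key lemma'' $\mathscr{E}_{n}=\mathscr{E}_{m}\Leftrightarrow\K^{\ast}_{n}(R)=\K^{\ast}_{m}(R)$ need not be proved from scratch: Theorem~\ref{fpd-exact-seq} already lists $\Ex_{n}=\Ex_{m}$ (equality of the classes of $n$- and $m$-exact complexes) among the conditions equivalent to $l.\fPD(R)\leq n$, and $\K^{\ast}_{n}(R)=\K^{\ast}(R)\cap\Ex_{n}$, so the equivalence can be read off directly; your three-term complex $0\to A\to B\to C\to 0$ placed in degrees $0,1,2$ is nonetheless a correct and self-contained verification of the nontrivial direction.
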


	\section{the related homological dimensions}
	
	We begin with the notions of $n$-projective  dimensions  and $n$-injective dimensions of complexes in $ \D^b_{n}(R)$. 
	\begin{definition}
		
		Let $X^{\bullet}\in \D^b_{n}(R)$.
		
		\begin{enumerate}
			\item An $n$-projective resolution of $X^{\bullet}$ is an $n$-quasi-isomorphism $f:P^{\bullet}\rightarrow X^{\bullet}$ with $P^{\bullet}$ a complex of $n$-projective $R$-modules satisfying $\Hom_R(P^{\bullet},-)$ preserves $n$-exact complexes. Dually, an $n$-injective coresolution of $X^{\bullet}$ is defined.
			\item $X^{\bullet}$ is said to have $n$-projective dimension at most $m$, written $n$-\pd$_RX^{\bullet}\leq m$ if there exists an $n$-projective resolution $P^{\bullet}\rightarrow X^{\bullet}$ with $P^{i}=0$ for any $i<-m$. If  $n$-\pd$_RX^{\bullet}\leq m$  for all $m$, then we write $n$-\pd$_RX^{\bullet}=-\infty$; and if there exists no $m$ such that $n$-\pd$_RX^{\bullet}\leq m$ , then we write $n$-\pd$_RX^{\bullet}=\infty$.
			\item the $n$-injective dimension $n$-\id$_RX^{\bullet}$ can be defined dually.
		\end{enumerate}
	\end{definition}

	\begin{remark}
		Let $X^{\bullet}\in \D^b_{n}(R)$.
		\begin{enumerate}
			\item The  $n$-projective (resp., $n$-injective) resolution of a stalk complex is precisely  that of a module defined in Remark  \ref{ppr-M}.
			
			\item Let $X^{\bullet}\in \D^b_{n}(R)$. Then $n$-\pd$_RX^{\bullet}=-\infty$ (equivalently, $n$-\id$_RX^{\bullet}=\infty$)  is equivlent to that $X^{\bullet}$ is an $n$-exact complex.
			\item  $n$-\pd$_RX^{\bullet}=-\sup\{\inf\{m \in\mathbb{Z}\mid P_m = 0\} \mid P^{\bullet}\rightarrow X^{\bullet}$ is an $n$-projective resolution$\}$.
			\item  $n$-\id$_RX^{\bullet}=\inf\{\sup\{m \in\mathbb{Z}\mid I_m = 0\} \mid X^{\bullet}\rightarrow I^{\bullet}$ is an $n$-injective resolution$\}$.
		\end{enumerate}
	\end{remark}
	
	\begin{proposition}\label{pd-fpd}  Let $M$ be an $R$-module. 
		\begin{enumerate}
			\item If $n$-\pd$_RM\leq m$. Then $m\leq \pd_RM\leq n+m$. 
			\item If $n$-\id$_RM\leq m$. Then $m\leq \id_RM\leq n+m$.
		\end{enumerate}
	\end{proposition}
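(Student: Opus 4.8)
The plan is to prove (1) by establishing the two bounds $\pd_R M\le n+m$ and $m\le \pd_R M$ separately, and then to read off (2) from (1) by dualizing everything (injective coresolutions, $\fI_n$, and injective dimension in place of $n$-projective resolutions, $\fP_n$, and projective dimension); the only fact needed for (2) beyond formal dualization is that an $n$-injective module has injective dimension $\le n$, which is dual to the statement about $\fP_n$ that I isolate first.

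The first step is to record the structure of $\fP_n$. Since every module has an $\fP_n$-precover that can be built out of copies of modules from $\Proj^{<n+1}$, and since $\fP_n$ is closed under direct sums and direct summands, splitting an $n$-exact presentation $0\to K\to P\to N\to 0$ of an $n$-projective module $N$ exhibits $N$ as a direct summand of a direct sum of modules in $\Proj^{<n+1}$. Because each member of $\Proj^{<n+1}$ has a finite projective resolution of length $\le n$, this yields $\pd_R Q\le n$ for every $Q\in\fP_n$. This is where the hypothesis bounding the lengths of the resolutions defining $\Proj^{<n+1}$ is actually used, and it is the numerical heart of the argument.

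For $\pd_R M\le n+m$: fix an $n$-projective resolution $0\to P_m\to P_{m-1}\to\cdots\to P_0\to M\to 0$ witnessing $n\text{-}\pd_R M\le m$, break it into short exact sequences $0\to Z_{i+1}\to P_i\to Z_i\to 0$ (with $Z_1=\ker(P_0\to M)$ and $Z_m=P_m$), and run the elementary estimates $\pd_R M\le\max\{\pd_R P_0,\ \pd_R Z_1+1\}$ and $\pd_R Z_i\le\max\{\pd_R P_i,\ \pd_R Z_{i+1}+1\}$ down the resolution. Substituting $\pd_R P_i\le n$ from the previous step makes these telescope to $\pd_R M\le\max\{n,n+1,\dots,n+m\}=n+m$.

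The bound $m\le\pd_R M$ (reading $m$ as the actual value $n\text{-}\pd_R M$, so that it asserts $n\text{-}\pd_R M\le\pd_R M$) I would attempt by induction on $j:=\pd_R M$; the case $j=\infty$ is vacuous and $j=0$ is clear since projective modules are $n$-projective. For the inductive step, take an $\fP_n$-precover $\pi\colon P_0\to M$, which is surjective because $\fP_0\subseteq\fP_n$, with $n$-exact kernel sequence $0\to K_0\to P_0\to M\to 0$; splicing an $n$-projective resolution of $K_0$ onto $\pi$ gives $n\text{-}\pd_R M\le n\text{-}\pd_R K_0+1$, so it suffices to bound $\pd_R K_0$ by $j-1$ and invoke the inductive hypothesis. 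The main obstacle lies here: the short exact sequence only gives $\pd_R K_0\le\max\{\pd_R P_0,j-1\}\le\max\{n,j-1\}$, which is $\le j-1$ only when $j\ge n+1$. To cover the remaining range $j\le n$ I would have to choose the $\fP_n$-precover compatibly with a fixed finite projective resolution $0\to Q_j\to\cdots\to Q_0\to M\to 0$ of $M$ — arranging, say, that $Q_0$ is a direct summand of $P_0$ and that the induced comparison $K_0\to\ker(Q_0\to M)$ is split — so that $K_0$ inherits a length-$(j-1)$ projective resolution. Showing that such a compatible precover can be produced, and then checking that the spliced complex is genuinely $n$-exact at every degree, is the point I expect to be delicate and where the whole argument either goes through or has to be supplemented with an extra hypothesis on $R$.
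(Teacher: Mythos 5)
Your handling of the right-hand inequality $\pd_RM\leq n+m$ is correct and is essentially the paper's own argument: reduce to the fact that every module in $\fP_n$ has projective dimension at most $n$, then dimension-shift down the resolution (the paper does the shift with explicit $\Ext$-groups rather than your $\max$-formula, which is the same computation). You in fact do slightly more than the paper, which simply asserts ``each $P_i$ has projective dimension at most $n$''; your derivation of this from the splitting of the canonical $n$-exact presentation by a direct sum of modules in $\Proj^{<n+1}$ is the right justification. Part (2) is dismissed in the paper with ``one can prove it dually,'' so your treatment of it is at the same level of rigour as the source.

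The left-hand inequality is where the real issue lies, and the obstacle you isolate is genuine, not an artifact of your method. The paper's entire proof of this direction is the sentence ``since each projective module is $n$-projective, $\pd_RM\geq n\mbox{-}\pd_RM$,'' which implicitly treats an ordinary projective resolution of length $\pd_RM$ as an $n$-projective resolution. But an $n$-projective resolution is required to be an $n$-quasi-isomorphism, i.e.\ the augmented complex must be $n$-exact, and the syzygy sequences of a projective resolution need not be $n$-exact: over $\mathbb{Z}$ with $n=1$ the sequence $0\to\mathbb{Z}\xrightarrow{\ 2\ }\mathbb{Z}\to\mathbb{Z}/2\to0$ is not $1$-exact (it is not pure), so the minimal projective resolution of $\mathbb{Z}/2$ is not a $1$-projective resolution. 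In other words, the paper's proof of this direction contains the same hole your induction runs into; it is merely hidden.

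Worse, you should not expect your ``compatible precover'' repair to succeed in general, because the inequality $n\mbox{-}\pd_RM\leq\pd_RM$ appears to fail without additional hypotheses. By the paper's own Remark 2.1(2), for the Kronecker algebra $R$ (hereditary, $\gldim(R)=1$) the classes $\fP_n$ for $n\geq1$ coincide with the pure-projectives and the $n$-exact structure with the pure-exact one, so $1\mbox{-}\pd_R$ is the pure-projective dimension. Over an uncountable base field the pure global dimension of $R$ is at least $2$, so there is a module $M$ with $1\mbox{-}\pd_RM=2$ while $\pd_RM\leq\gldim(R)=1$, contradicting $m\leq\pd_RM$. So the gap you flagged is not one you can close by a cleverer choice of precover: the statement itself needs a corrected formulation (or an extra hypothesis such as countability of $R$, or $\fP_n$ being closed under kernels of epimorphisms) before the left-hand inequality can be proved.
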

	\begin{proof} (1) If $n=\infty$, the right inequality certainly holds. So we can assume $n<\infty$. Suppose $n$-\pd$_RM\leq m$. Then there exists an exact sequence
		$$0 \rightarrow P_m\xrightarrow{d_m}P_{m-1}\xrightarrow{d_{m-1}} \cdots \rightarrow P_1 \xrightarrow{d_1} P_0 \xrightarrow{d_0} M \rightarrow 0$$
		with each $P_i\in \fP_n.$ Note each $P_i$ has projective dimension at most $n$. Setting $C_{m-1}:=\Coker(d_m)$,  there is a short exact sequence $0\rightarrow P_m\rightarrow P_{m-1}\rightarrow C_{m-1}\rightarrow0.$ Then for any $R$-module $N$, we have an exact sequence $$0=\Ext_R^{n+1}(P_m,N)\rightarrow\Ext_R^{n+2}(C_{m-1},N)\rightarrow\Ext_R^{n+2}(P_{m-1},N)=0.$$ Hence $\Ext_R^{n+2}(C_{m-1},N)$ for any $R$-module $N$, and so $\pd_RC_{m-1}\leq n+1.$ Iterating these steps, we have \pd$_RM\leq n+m$.  Since each projective module is $n$-projective, we have $\pd_RM\geq  n$-\pd$_RM=m$.
		
		(2) one can prove it dually.
	\end{proof}
	
	\begin{remark} The converse of Proposition \ref{pd-fpd} is not true. Indeed, let $\mathbb{Z}$ be ring of integers with $\mathbb{Q}$ its quotient field.  Then $\pd_{\mathbb{Z}}\mathbb{Q}=1$. However, by \cite[Remark 3.5]{ZZW}, $\mathbb{Q}$ is not $1$-projective. So  $1$-\pd$_{\mathbb{Z}}\mathbb{Q}=1<1+1=2$.  Similarly, $\id_{\mathbb{Z}}\mathbb{Z}=1$. Since $\mathbb{Z}$ is not $1$-injective, we also have  $1$-\pd$_{\mathbb{Z}}\mathbb{Z}=1<2$. 
	\end{remark}

	Let $X^{\bullet}\in \D^b_{n}(R)$. Then by \cite[Proposition 5.10]{ZZW}, there exists a complex $P^{\bullet}\in \K^{-}(\fP_n)$ such
	that $P^{\bullet}\cong X^{\bullet}$  in $\D^b_{n}(R)$. By \cite[Lemma 4.8]{ZZW}, the functor $\Hom_R(P^{\bullet},-)$ preserves $n$-exact complexes, and hence preserves $n$-quasi-isomorphisms. Then we get an $n$-quasi-isomorphism from $P^{\bullet}$
	to $X^{\bullet}$. The statements for the $n$-injective version are dual. Following above, we have the following result.

	\begin{corollary}
		Let $X^{\bullet}\in \D^b_{n}(R)$. Then
		$X^{\bullet}$ admits an $n$-projective $($resp., $n$-injective$)$ resolution $($resp., coresolution$).$
	\end{corollary}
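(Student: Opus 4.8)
The plan is to combine \cite[Proposition 5.10]{ZZW} with \cite[Lemma 4.8]{ZZW}, the point being to promote the formal isomorphism produced by the former into a genuine $n$-quasi-isomorphism of complexes. First I would take $X^{\bullet}\in \D^b_{n}(R)$ and apply \cite[Proposition 5.10]{ZZW} to obtain a bounded above complex $P^{\bullet}\in \K^{-}(\fP_n)$ together with an isomorphism $P^{\bullet}\cong X^{\bullet}$ in $\D^b_{n}(R)$. The difficulty is that such an isomorphism is a priori only represented by a roof $P^{\bullet}\xleftarrow{s}Z^{\bullet}\xrightarrow{f}X^{\bullet}$ in which $s$ is an $n$-quasi-isomorphism, whereas the definition of an $n$-projective resolution requires an honest cochain map $P^{\bullet}\to X^{\bullet}$ that is itself an $n$-quasi-isomorphism.

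Next I would exploit that $P^{\bullet}$ is homotopically projective with respect to the exact structure $\Ex_n$. By \cite[Lemma 4.8]{ZZW} the functor $\Hom_R(P^{\bullet},-)$ sends $n$-exact complexes to acyclic complexes, so $\Hom_{\K(R)}(P^{\bullet},N^{\bullet})=0$ for every $n$-exact $N^{\bullet}$. Applying $\Hom_{\K(R)}(P^{\bullet},-)$ to the triangle $Z^{\bullet}\xrightarrow{s}P^{\bullet}\to\Cone^{\bullet}(s)\to Z^{\bullet}[1]$ and using that $\Cone^{\bullet}(s)$ is $n$-exact, one obtains a bijection $\Hom_{\K(R)}(P^{\bullet},Z^{\bullet})\xrightarrow{\ \sim\ }\Hom_{\K(R)}(P^{\bullet},P^{\bullet})$; lifting $\Id_{P^{\bullet}}$ through it gives a cochain map $t:P^{\bullet}\to Z^{\bullet}$ with $st\simeq \Id_{P^{\bullet}}$, hence $t=s^{-1}$ in $\D^b_{n}(R)$, so $ft:P^{\bullet}\to X^{\bullet}$ represents the given isomorphism. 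Consequently $\Cone^{\bullet}(ft)$ is $n$-exact, i.e. $ft$ is an $n$-quasi-isomorphism; since $P^{i}=0$ for $i\gg 0$, each $P^{i}\in\fP_n$, and $\Hom_R(P^{\bullet},-)$ preserves $n$-exact complexes, $ft:P^{\bullet}\to X^{\bullet}$ is an $n$-projective resolution of $X^{\bullet}$.

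Finally, the $n$-injective coresolution is produced by the formally dual argument: apply the dual of \cite[Proposition 5.10]{ZZW} to get $I^{\bullet}\in\K^{+}(\fI_n)$ with $I^{\bullet}\cong X^{\bullet}$ in $\D^b_{n}(R)$, and use the dual of \cite[Lemma 4.8]{ZZW}, namely that $\Hom_R(-,I^{\bullet})$ sends $n$-exact complexes to acyclic complexes, to replace the connecting roof by an honest $n$-quasi-isomorphism $X^{\bullet}\to I^{\bullet}$. I expect the only real obstacle to be the middle step, that is, checking that a bounded above complex of $n$-projectives with the $\Hom_R(P^{\bullet},-)$-exactness property is genuinely homotopically $\Ex_n$-projective (the relative analogue of the classical fact that bounded above complexes of projectives are homotopically projective); granting that, the boundedness bookkeeping and the dualization are routine.
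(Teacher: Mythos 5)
Your proposal is correct and follows essentially the same route as the paper: invoke \cite[Proposition 5.10]{ZZW} to replace $X^{\bullet}$ by $P^{\bullet}\in\K^{-}(\fP_n)$, then use \cite[Lemma 4.8]{ZZW} (that $\Hom_R(P^{\bullet},-)$ kills $n$-exact complexes) to straighten the roof into an honest $n$-quasi-isomorphism $P^{\bullet}\to X^{\bullet}$, with the injective case dual. The paper compresses the roof-straightening into one sentence, whereas you spell it out; the "obstacle" you flag at the end is exactly what \cite[Lemma 4.8]{ZZW} supplies, so there is no gap.
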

	Now we can define a functor
	$$\RHom_R(-, -): \D^b_{n}(R)^{op}\times \D^b_{n}(R) \rightarrow \D_{n}(\mathbb{Z})$$
	using either the $n$-projective resolution of the first variable or the $n$-injective coresolution of the second variable. More precisely, let $P_{X^{\bullet}}$ be an $n$-projective resolution of $X^{\bullet}$ and
	$I_{Y^{\bullet}}$ an $n$-injective coresolution of $Y^{\bullet}$ . Then we have the following $n$-quasi-isomorphisms
	$$\RHom_R(X^{\bullet}, Y^{\bullet}) \longrightarrow \Hom_R(P_{X^{\bullet}}, I_{Y^{\bullet}} )\longleftarrow  \RHom_R(X^{\bullet}, Y^{\bullet}),$$
	where $\RHom_R(X^{\bullet}, Y^{\bullet}):= \Hom_R(P_{X^{\bullet}}, Y^{\bullet})$ and $\Hom_R(X^{\bullet}, I_{Y^{\bullet}} ) := \RHom_R(X^{\bullet}, Y^{\bullet}).$
	It follows that $\RHom_R(-,-)$ is well defined, and we call it the right $n$-derived functor of
	Hom. In order to coincide with the classical ones, we put
	$$\next^i_R(X^{\bullet}, Y^{\bullet} ) := \H^i\RHom_R(X^{\bullet}, Y^{\bullet}) = \H^i\Hom_R(P_{X^{\bullet}}, Y^{\bullet}) = \H^i\Hom_R(X^{\bullet}, I_{Y^{\bullet}} ).$$
	For any $X^{\bullet}\in \C(R),$ we write
	$$\infn X^{\bullet} := \inf\{m \in\mathbb{Z} \mid X^{\bullet}\ \mbox{is not}\ n\mbox{-exact at}\  m\},$$
	$$\supn X^{\bullet} := \sup\{m \in\mathbb{Z} \mid X^{\bullet}\ \mbox{is not}\ n\mbox{-exact at}\  m\}.$$
	If $X^{\bullet}$ is not $n$-exact at $m$ for any $m$, then we set $\infn X^{\bullet}=-\infty$ and $\supn X^{\bullet}=\infty$. If $X^{\bullet}$
	is $n$-exact at $m$ for all $m,$ that is, $X^{\bullet}$ is an $n$-exact complex, then we set $\infn X^{\bullet}=\infty$
	and $\supn X^{\bullet}=-\infty$.
	Recall that $X^{\bullet}\in \C(R) $ is called contractible if it is isomorphic to the zero object in $\K(R),$
	equivalently, the identical map $\Id_{X^{\bullet}}$ is homotopic to zero, that is, $X^{\bullet}$ is splitting exact by \cite[Exercise 1.4.3]{Weibel}. The following result gives some criteria for computing $n$-projective
	dimension in terms of the $n$-projective resolutions and $n$-derived functors.

	\begin{theorem}\label{c-fpn}
		Let $X^{\bullet} \in\D^b_{n}(R)$ and $m \in \mathbb{Z}.$ Then the following statements are equivalent.
		\begin{enumerate}
			\item $n$-\pd$_RX^{\bullet} \leq m.$
			\item  $\infn X^{\bullet}\geq -m$, and if $f: (P')^{\bullet}\rightarrow X^{\bullet}$ is an $n$-projective resolution of $X^{\bullet},$ then the $R$-module $\Coker(d_{(P')^{\bullet}}^{-m-1})$  is $n$-projective.
			\item  If $f: (P')^{\bullet}\rightarrow X^{\bullet}$ is an $n$-projective resolution of $X^{\bullet}$, then $(P')^{\bullet}= (P_1)^{\bullet}\oplus (P_2)^{\bullet},$ where $(P')_1^i = 0$  for any $i <-m$ and $(P_2)^{\bullet}$ is contractible.
			\item  $\next^i_R(X^{\bullet}, Y^{\bullet}) = 0$ for any $Y^{\bullet} \in \D^b_{n}(R)$ and $i > m + \supn Y^{\bullet}$.
			\item  $\infn X^{\bullet}\geq -m$ and $\next_R^{m+1}(X^{\bullet}, N) = 0$ for any $N \in $R-\Mod.
		\end{enumerate}
	\end{theorem}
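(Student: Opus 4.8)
The plan is to establish the equivalences through the chain $(1)\Rightarrow(3)\Rightarrow(2)\Rightarrow(1)$ for the first three conditions, then $(2)\Leftrightarrow(5)$, and finally $(1)\Rightarrow(4)\Rightarrow(5)$, so that all five become linked. Two background facts will carry most of the weight. The first is a \emph{comparison theorem}: any two $n$-projective resolutions of an object of $\D^b_{n}(R)$ are homotopy equivalent — proved exactly as classically, since the requirement that $\Hom_{R}(P^{\bullet},-)$ preserve $n$-exact complexes (automatic for $P^{\bullet}\in\K^{-}(\fP_{n})$ by \cite[Lemma 4.8]{ZZW}) lets one lift maps through $n$-quasi-isomorphisms, while \cite[Proposition 5.10]{ZZW} provides the resolutions themselves. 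The second is the observation that a bounded-above $n$-exact complex of $n$-projective modules is contractible: splitting the $n$-exact sequences $0\to\Ker d^{i}\to P^{i}\to\Coker d^{i-1}\to0$ one at a time from the top downward, using that $\fP_{n}$ is closed under direct summands, exhibits such a complex as a direct sum of complexes $0\to L\xrightarrow{\ \mathrm{id}\ }L\to0$. Its formal consequence — a bounded-above complex of $n$-projectives homotopy equivalent to one concentrated in degrees $\geq-m$ splits as a complex concentrated in degrees $\geq-m$ plus a contractible complex of $n$-projectives — is what makes $(1),(2),(3)$ interchangeable.

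With these in hand, $(1)\Rightarrow(3)$ is immediate: an $n$-projective resolution witnessing $n\mbox{-}\pd_{R}X^{\bullet}\leq m$ is concentrated in degrees $\geq-m$, so any other $n$-projective resolution $(P')^{\bullet}$, being homotopy equivalent to it, decomposes as in $(3)$. Then $(3)\Rightarrow(2)$ is read off: the contractible summand $(P_2)^{\bullet}$ contributes to $\Coker(d_{(P')^{\bullet}}^{-m-1})$ only a direct summand of $(P_2)^{-m}$, so $\Coker(d_{(P')^{\bullet}}^{-m-1})=(P_1)^{-m}\oplus(\text{summand of }(P_2)^{-m})$ is $n$-projective, while $(P_1)^{\bullet}$, hence $X^{\bullet}$, is $n$-exact below degree $-m$, giving $\infn X^{\bullet}\geq-m$. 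For $(2)\Rightarrow(1)$: from $\infn X^{\bullet}\geq-m$ the brutal truncation of $(P')^{\bullet}$ at degree $-m$, with $(P')^{-m}$ replaced by $C:=\Coker(d_{(P')^{\bullet}}^{-m-1})$, is $n$-quasi-isomorphic to $X^{\bullet}$ — its kernel, the low tail together with $\Im(d^{-m-1})$, is $n$-exact — and since $C$ is $n$-projective it is an $n$-projective resolution supported in degrees $\geq-m$.

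For the conditions involving $\next$, fix an $n$-projective resolution $P^{\bullet}\to X^{\bullet}$ and, assuming $\infn X^{\bullet}\geq-m$, put $D:=\Im(d_{P^{\bullet}}^{-m-1})$ and $C:=\Coker(d_{P^{\bullet}}^{-m-1})$, so $0\to D\to P^{-m}\to C\to0$ is exact and the tail $\cdots\to P^{-m-1}\to P^{-m}\to C\to0$ is exact. Using exactness of $P^{\bullet}$ below degree $-m$, a direct computation in the $\Hom$-complex identifies $\next^{m+1}_{R}(X^{\bullet},N)$ with $\Coker\big(\Hom_{R}(P^{-m},N)\to\Hom_{R}(D,N)\big)$ (restriction along $D\hookrightarrow P^{-m}$) for every module $N$; this vanishes for \emph{all} $N$ precisely when $D\hookrightarrow P^{-m}$ splits, i.e.\ when $C$ is a direct summand of $P^{-m}$, i.e.\ when $C$ is $n$-projective. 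Combined with the common clause $\infn X^{\bullet}\geq-m$ this yields $(2)\Leftrightarrow(5)$ (and hence $(5)\Rightarrow(1)$). The implication $(1)\Rightarrow(4)$ is an index count: take $P^{\bullet}$ supported in degrees $\geq-m$ and replace $Y^{\bullet}$ by an $n$-quasi-isomorphic complex supported in degrees $\leq\supn Y^{\bullet}$ (its good truncation), whence $\Hom_{R}(P^{\bullet},Y^{\bullet})$ vanishes in degrees $>m+\supn Y^{\bullet}$. Finally $(4)\Rightarrow(5)$: applying $(4)$ to $Y^{\bullet}=N$ a stalk module (so $\supn N=0$) gives $\next^{m+1}_{R}(X^{\bullet},N)=0$; and $\infn X^{\bullet}\geq-m$ follows since, if $X^{\bullet}$ failed to be $n$-exact at some $-\ell<-m$, then (an $n$-projective resolution being $n$-exact exactly where it is homologically exact) one would have $M:=H^{-\ell}(X^{\bullet})\neq0$, and embedding $M$ into an injective module $N$ one checks $\next^{\ell}_{R}(X^{\bullet},N)\twoheadrightarrow\Hom_{R}(M,N)\neq0$ with $\ell>m+\supn N$, contradicting $(4)$.

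The main obstacle I anticipate is not any single implication but the two structural inputs underlying all of them: (i) the comparison theorem together with the passage from ``homotopy equivalent complexes of $n$-projectives'' to ``isomorphic after adding contractible complexes of $n$-projectives'' — classical over projectives, but it must be verified to survive the replacement of projectives by $\fP_{n}$; it does, since only closure under direct summands and formal mapping-cylinder manipulations are used; and (ii) the bookkeeping around $\infn$: that $\infn$ is well defined on $\D^b_{n}(R)$ and that an $n$-projective resolution is $n$-exact in exactly the degrees in which it is homologically exact (equivalently, that all $n$-projective resolutions of a fixed object have $n$-exact syzygy sequences). Point (ii) is what legitimizes reducing everything to the single module $C$ and licenses the ``evenly $n$-exact'' step used in $(4)\Rightarrow(5)$; it should follow from the precover construction of resolutions in \cite{ZZW} together with the homotopy-invariance used in (i), but it is the point that requires care.
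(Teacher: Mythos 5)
Most of your architecture coincides with the paper's in substance: the cycle through $(1),(2),(3)$ rests on the same two inputs (homotopy equivalence of any two $n$-projective resolutions, and contractibility of bounded-above $n$-exact complexes of $n$-projectives), your identification of $\next^{m+1}_R(X^{\bullet},N)$ with $\Coker\bigl(\Hom_R(P^{-m},N)\to\Hom_R(D,N)\bigr)$ is essentially the paper's $(5)\Rightarrow(3)$ step done more cleanly, and $(1)\Rightarrow(4)$ via truncation of $Y^{\bullet}$ is the paper's $(3)\Rightarrow(4)$.

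The genuine gap is in $(4)\Rightarrow(5)$, at exactly the point you flagged as ``requiring care.'' The parenthetical claim that an $n$-projective resolution is $n$-exact precisely where it is homologically exact is false, and it does not follow from the precover construction. Failure of $n$-exactness at a degree is a relative phenomenon: the complex can be exact there while the sequence $0\to\Ker(d^{i})\to P^{i}\to\Coker(d^{i-1})\to 0$ fails to be $n$-exact. Concretely, take $n=1$, let $R$ be a finite-dimensional hereditary algebra of infinite representation type and $M$ a finite-dimensional non-projective module with non-split projective resolution $0\to P_1\to P_0\to M\to 0$; placed in degrees $-2,-1,0$ this is a bounded complex of $1$-projectives (hence its own $1$-projective resolution by \cite[Lemma 4.8]{ZZW}), it is homologically exact at $-1$, yet it is not $1$-exact at $-1$, since a short exact sequence ending in a $1$-projective module is $1$-exact only if it splits. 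For this $X^{\bullet}$ one computes $\next^{1}_R(X^{\bullet},N)=0$ for \emph{every} module $N$ (your cokernel formula degenerates to the identity map of $\Hom_R(M,N)$), even though $\infn X^{\bullet}=-1<0$ and $1\mbox{-}\pd_RX^{\bullet}=1$. So your strategy of detecting the failure of $n$-exactness at $-\ell$ by embedding $\H^{-\ell}(X^{\bullet})$ into an injective module cannot work: there may be nothing to embed, and stalk complexes simply do not see the obstruction. The paper closes this step by a different device: it tests against character modules $M^{+}$ of right $R$-modules, converts the vanishing of $\next^{i}_R(X^{\bullet},M^{+})$ for $i>m$ into exactness of $M\otimes_RP^{\bullet}$ in degrees $<-m$ via the adjunction $\Hom_R(P^{\bullet},M^{+})\cong(M\otimes_RP^{\bullet})^{+}$, and then invokes \cite[Proposition 5.9]{ZZW}, which characterizes $n$-exactness of complexes of $n$-projectives by exactness after tensoring --- a criterion that does detect the relative failures invisible to ordinary homology. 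Without this (or an equivalent substitute) your proof establishes $(1)\Rightarrow(4)$ but leaves $(4)$ unlinked to the remaining conditions.
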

	\begin{proof} $(1)\Rightarrow (2)$ Suppose $n$-\pd$_RX^{\bullet} \leq m.$ Then there exists an $n$-projective resoltuion $f:P^{\bullet}\rightarrow X^{\bullet}$ with $P^i=0$ for any $i<-m$. It follows by  \cite[Proposition 5.9]{ZZW} that $\infn X^{\bullet}\geq -m$. Let $f':(P')^{\bullet}\rightarrow X^{\bullet}$ be another $n$-projective resolution of $X^{\bullet}$. Then there exists a quasi-isomorphism of complexes
		$$\Hom_R(P^{\bullet},f'):\Hom_R(P^{\bullet},(P')^{\bullet})\rightarrow\Hom_R((P)^{\bullet},(X)^{\bullet}).$$So there exists a cochain map $g:P^{\bullet}\rightarrow (P')^{\bullet}$ such that $f'\circ g=f$. Therefore, 
		$$\Hom_R(F,f')\circ \Hom_R(F,g)=\Hom_R(F,f)$$
		for any $F\in\fP_n$. And so $g$ is an $n$-quasi-isomorphism by \cite[Proposition 4.6]{ZZW}. Then $g$ is a homotopy equivalence by \cite[Proposition 4.9]{ZZW}. It is easy to check that the exact sequence
		$$\cdots\rightarrow (P')^{-m-1}\xrightarrow{d_{(P')^{\bullet}}^{-m-1}}
		(P')^{-m}\xrightarrow{d_{(P')^{\bullet}}^{-m}}
		\Coker(d_{(P')^{\bullet}}^{-m})\rightarrow 0$$ is contractible, and hence $\Coker(d_{(P')^{\bullet}}^{-m})$ is $n$-projective.
		
		$(2)\Rightarrow (3)$ Let $f':(P')^{\bullet}\rightarrow X^{\bullet}$ be an $n$-projective resolution of $X^{\bullet}$. Then $\infn (P')^{\bullet}=\infn X^{\bullet}\geq-m$. So we have the sequence 	
		$$\cdots\rightarrow(P')^{-m-1}\xrightarrow{d_{(P')^{\bullet}}^{-m-1}}(P')^{-m}\xrightarrow{d_{(P')^{\bullet}}^{-m}}\Coker(d_{(P')^{\bullet}}^{-m})\rightarrow 0$$
		is $n$-exact. Because  $\Coker(d_{(P')^{\bullet}}^{-m})$ is $n$-projective by assumption, the above sequence is contractible. Set $(P')^{-m}=M\oplus \Coker(d_{(P')^{\bullet}}^{-m})$, and put $$(P_1)^{\bullet}:=\cdots\rightarrow0\rightarrow\Coker(d_{P'}^{-m})\rightarrow (P')^{-m+1}\rightarrow (P')^{-m+2}\rightarrow\cdots,\mbox{and}$$
		$$(P_2)^{\bullet}:=\cdots\rightarrow(P')^{-m-2}\rightarrow(P')^{-m-1}\rightarrow M\rightarrow 0\rightarrow \cdots.$$
		Then we have $(P')^{\bullet}=(P_1)^{\bullet}\oplus (P_2)^{\bullet}$, where $P_1^i=0$ for any $i<-m$ and $(P_2)^{\bullet}$ is contractible.	
		
		$(3)\Rightarrow (1)$ It follows by $(3)$ that the embedding
		$(P_1)^{\bullet}\hookrightarrow (P)^{\bullet}$ is  an $n$-quasi-isomorphism. This implies that $(X)^{\bullet}$ admits an $n$-projective resolution $(P_1)^{\bullet}\hookrightarrow (P)^{\bullet} \rightarrow (X)^{\bullet}$
		with $P^i_1 = 0$ for any $i<-m.$
		
		$(3)\Rightarrow (4)$ We only need to consider the case $\supn Y^{\bullet} = t <\infty$. Let $P^{\bullet}\rightarrow X^{\bullet}$
		be an $n$-projective resolution of $X^{\bullet}$. Then $P^{\bullet}= (P_1)^{\bullet} \oplus (P_2)^{\bullet}$, where $P_1^i = 0$ for any $i < -m$ and
		$(P_2)^{\bullet}$ is contractible. So we have
		$$\next^i_R(X^{\bullet}, Y^{\bullet}) = \H^i\Hom_R(P^{\bullet}, Y^{\bullet}) = \H^i\Hom_R((P_1)^{\bullet},Y ^{\bullet}).$$
		It follows from \cite[Proposition 5.6]{ZZW} that if  $Y^{\bullet}$ be the right canonical truncation complex of $Y^{\bullet}$ at degree
		$t$, then the embedding $(Y')^{\bullet}\hookrightarrow (Y)^{\bullet}$ is an $n$-quasi-isomorphism. So we have
		$$\H^i\Hom_R((P_1)^{\bullet},(Y)^{\bullet})=\H^i\Hom_R((P_1)^{\bullet},(Y')^{\bullet})=0$$
		for any $i > m + t$. Thus $\next^i_R(X^{\bullet},Y^{\bullet}) = 0$ for any $Y^{\bullet}\in\D_{n}(R)$ and $i>m +\supn Y^{\bullet}$.
		
		$(4)\Rightarrow (5)$ For any $N\in R$-\Mod, we have $\supn N = 0$ and $\next^{m+1}_R(X^{\bullet}, N) = 0$ by (4).
		Let $M$ be a right $R$-module. Then
		$$\H^i((M\otimes_R X^{\bullet})^{+}) = \H^i(\Hom_R(X^{\bullet}, M^{+})) = \next^i_ R(X^{\bullet}, M^{+}) = 0$$
		for any $i > m$ by the adjoint isomorphism theorem and (4). So $M\otimes_R X^{\bullet}$ is exact in degree $< -m,$
		and hence $X^{\bullet}$ is $n$-exact in degree $< -m$ by \cite[Proposition 5.9]{ZZW}. It implies that $\infn X^{\bullet} \geq -m.$
		
		$(5)\Rightarrow(3)$ Let $P^{\bullet}$ be an $n$-projective
		resolution of $X^{\bullet}$ and $N \in R$-\Mod. Then we have
		$\infn P^{\bullet}=\infn X^{\bullet}\geq-m.$
		So $P^{\bullet}$ is $n$-exact in degree $\leq-m-1$, and hence the sequence
		$$\cdots\rightarrow (P')^{-m-2}\rightarrow (P')^{-m-1}\xrightarrow{d^{-m-1}_{(P')^{\bullet}}} (P')^{-m}\rightarrow \Coker(d^{-m-1}_{(P')^{\bullet}}) \rightarrow 0$$
		is $n$-exact and it is an $n$-projective resolution of $\Coker(d^{-m-1}_{(P')^{\bullet}})$. We have the following
		equalities
		$$\next^1_R(\Coker(d^{-m-1}_{(P')^{\bullet}}),N)
		=\H^{m+1}\Hom_R(P^{\bullet}, N) = \next^{m+1}_R(X^{\bullet}, N) = 0.$$
		It implies that $\Coker(d^{-m-1}_{(P')^{\bullet}})$ is $n$-projective. Thus the above  $n$-exact complex is contractible, and therefore $P^{\bullet}=(P_1)^{\bullet}\oplus (P_2)^{\bullet}$, where $P^1_i=0$ for any $i<-m.$
	\end{proof}
	
	As for $n$-injective dimension $n$-\pd$_RX^{\bullet}$ of  $X^{\bullet} \in\D^b_{n}(R)$, one can prove the the following result dually.
	\begin{theorem}\label{c-fin}
		Let $X^{\bullet} \in\D^b_{n}(R)$ and $m \in \mathbb{Z}.$ Then the following statements are equivalent.
		\begin{enumerate}
			\item $n$-\id$_RX^{\bullet} \leq m.$
			\item  $\supn X^{\bullet}\leq m$, and if $f': X^{\bullet}\rightarrow (I')^{\bullet}$ is an $n$-injective resolution of $X^{\bullet},$ then the $R$-module $\Coker(d^m_{(I')^{\bullet}})$
			is $n$-injective.
			\item  If $f': X^{\bullet}\rightarrow I^{\bullet}$ is an $n$-injective resolution of $X^{\bullet}$, then $(I')^{\bullet}= (I_1)^{\bullet}\oplus (I_2)^{\bullet},$ where $I_1^i = 0$  for any $i >m$ and $(I_2)^{\bullet}$ is contractible.
			\item  $\next^i_R(Y^{\bullet},X^{\bullet}) = 0$ for any $Y^{\bullet} \in \D_{n}(R)$ and $i > m -\infn Y$.
			\item  $\supn X^{\bullet}\leq m$ and $\next_R^{m+1}(N, X^{\bullet}) = 0$ for any $N \in $R-\Mod.
		\end{enumerate}
	\end{theorem}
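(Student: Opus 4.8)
The statement is the precise formal dual of Theorem~\ref{c-fpn}, and the plan is to dualize the proof just given. The translation dictionary is: $n$-projective resolution $\rightsquigarrow$ $n$-injective coresolution; $n\mbox{-}\pd$ $\rightsquigarrow$ $n\mbox{-}\id$; $\infn(-)$ $\rightsquigarrow$ $\supn(-)$; the lower bound ``$\geq -m$'' $\rightsquigarrow$ the upper bound ``$\leq m$''; right canonical truncation $\rightsquigarrow$ left canonical truncation; a contractible tail in low degrees $\rightsquigarrow$ a contractible tail in high degrees; and each cited result of \cite{ZZW} (Lemma~4.8 and Propositions~4.6, 4.9, 5.6, 5.9, 5.10) $\rightsquigarrow$ its $\Hom_R(-,\text{$n$-injectives})$ dual, all of which are available since $n$-injective coresolutions exist and are well behaved by Remark~\ref{ppr-M} and the dual of \cite[Proposition 5.10]{ZZW}. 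Concretely I would establish the cycle $(1)\Rightarrow(2)\Rightarrow(3)\Rightarrow(1)$ together with $(3)\Rightarrow(4)\Rightarrow(5)\Rightarrow(3)$, matching the layout above step for step.

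For $(1)\Rightarrow(2)\Rightarrow(3)\Rightarrow(1)$: if $n\mbox{-}\id_RX^{\bullet}\leq m$, pick an $n$-injective coresolution $X^{\bullet}\to I^{\bullet}$ with $I^{i}=0$ for $i>m$; the dual of \cite[Proposition 5.9]{ZZW} gives $\supn X^{\bullet}\leq m$, and comparing $I^{\bullet}$ with an arbitrary $n$-injective coresolution $(I')^{\bullet}$ through the homotopy equivalence provided by the duals of \cite[Proposition 4.6]{ZZW} and \cite[Proposition 4.9]{ZZW} shows that the part of $(I')^{\bullet}$ in degrees $>m$, together with the cosyzygy, forms a contractible $n$-exact complex; hence $\Coker(d^{m}_{(I')^{\bullet}})$ is $n$-injective, which is $(2)$. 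Conversely, under $(2)$ the condition $\supn X^{\bullet}\leq m$ makes that same tail an $n$-injective coresolution of the $n$-injective module $\Coker(d^{m}_{(I')^{\bullet}})$, hence $n$-exact and therefore contractible; splitting it off gives $(I')^{\bullet}=(I_{1})^{\bullet}\oplus(I_{2})^{\bullet}$ with $I_{1}^{i}=0$ for $i>m$ and $(I_{2})^{\bullet}$ contractible, which is $(3)$. Finally $(3)\Rightarrow(1)$ is immediate, since the canonical surjection $(I')^{\bullet}\twoheadrightarrow(I_{1})^{\bullet}$ is an $n$-quasi-isomorphism exhibiting $(I_{1})^{\bullet}$ as an $n$-injective coresolution concentrated in degrees $\leq m$.

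For $(3)\Rightarrow(4)$: we may assume $\infn Y^{\bullet}=s>-\infty$. Choose an $n$-injective coresolution $I^{\bullet}$ of $X^{\bullet}$ and decompose it as in $(3)$, so $\next^{i}_{R}(Y^{\bullet},X^{\bullet})=\H^{i}\Hom_R(Y^{\bullet},I^{\bullet})=\H^{i}\Hom_R(Y^{\bullet},(I_{1})^{\bullet})$; then replace $Y^{\bullet}$ by its left canonical truncation at degree $s$, an $n$-quasi-isomorphism by the dual of \cite[Proposition 5.6]{ZZW}, which vanishes in degrees $<s$. Since $(I_{1})^{\bullet}$ vanishes in degrees $>m$, the Hom-complex vanishes above cochain degree $m-s=m-\infn Y^{\bullet}$, giving $(4)$. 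For $(4)\Rightarrow(5)$: applying $(4)$ to $Y^{\bullet}=N[0]$ (with $\infn N[0]=0$) yields $\next^{m+1}_R(N,X^{\bullet})=0$ for every module $N$; the remaining point is $\supn X^{\bullet}\leq m$, which I would obtain by passing to an $n$-injective coresolution $I^{\bullet}$ of $X^{\bullet}$: taking $N=R$ gives $\H^{i}(I^{\bullet})=0$ for $i>m$, and then, using $\H^{i}\Hom_R(P,I^{\bullet})=\next^{i}_R(P,X^{\bullet})=0$ for $P\in\Proj^{<n+1}$ and $i>m$, a short diagram chase shows that each short exact sequence $0\to\Ker(d^{j}_{I^{\bullet}})\to I^{j}\to\Ker(d^{j+1}_{I^{\bullet}})\to 0$ with $j>m$ stays exact after $\Hom_R(P,-)$, i.e.\ is $n$-exact; hence $I^{\bullet}$, and therefore $X^{\bullet}$, is $n$-exact in degrees $>m$. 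Finally $(5)\Rightarrow(3)$ dualizes directly: $\supn X^{\bullet}\leq m$ makes the relevant truncated sequence an $n$-injective coresolution of the cosyzygy $C:=\Coker(d^{m}_{I^{\bullet}})$, the equality $\next^{1}_R(N,C)=\next^{m+1}_R(N,X^{\bullet})=0$ for all $N$ forces $C$ to be $n$-injective (the relative dual of the fact that $\Ext^{1}(-,C)=0$ makes $C$ injective), and then that tail is contractible and splits off as in $(2)\Rightarrow(3)$.

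The one step where the dualization is not automatic is the production of the bound $\supn X^{\bullet}\leq m$ in $(4)\Rightarrow(5)$: the proof of Theorem~\ref{c-fpn} gets the symmetric bound $\infn X^{\bullet}\geq -m$ by placing $X^{\bullet}$ in the first variable of $\next$ and using character modules and the $\Hom$--tensor adjunction to turn vanishing of $\next$ into pure-exactness of $X^{\bullet}$, a device that is unavailable when $X^{\bullet}$ occupies the second variable. I expect this to be the main obstacle; the substitute sketched above works precisely because ``$\Hom_R(P,-)$ exact for all $P\in\Proj^{<n+1}$'' is by definition $n$-exactness, so it can be detected straight from the vanishing of $\next^{i}_R(P,X^{\bullet})$ once an $n$-injective coresolution is in hand (alternatively one invokes directly the dual of \cite[Proposition 5.9]{ZZW}). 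The rest---sign conventions, the exact degree in which each cosyzygy lives, and the contractibility of the split-off summands, using that $\fI_n$ is closed under direct summands---is routine and runs parallel to the proof of Theorem~\ref{c-fpn}.
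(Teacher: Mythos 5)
Your proposal is correct and matches the paper's treatment: the paper offers no separate argument for this theorem, saying only that it is proved dually to Theorem~\ref{c-fpn}, and the dualization you carry out (the cycle $(1)\Rightarrow(2)\Rightarrow(3)\Rightarrow(1)$ together with $(3)\Rightarrow(4)\Rightarrow(5)\Rightarrow(3)$, with the same truncation and split-off-contractible-summand devices) is exactly that. Your one substantive addition --- noting that $(4)\Rightarrow(5)$ does not dualize formally because the character-module/adjunction trick is unavailable when $X^{\bullet}$ occupies the second variable of $\next$, and replacing it by the observation that $\next^{i}_R(R,X^{\bullet})=0$ and $\next^{i}_R(P,X^{\bullet})=\H^{i}\Hom_R(P,I^{\bullet})=0$ for all $P\in\Proj^{<n+1}$ and $i>m$ force the coresolution to be $n$-exact in degrees $>m$ by the very definition of $n$-exactness --- is sound and fills a step the paper's ``one can prove the following result dually'' silently glosses over.
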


	We can characterize $n$-projective dimension and $n$-injective dimension of an complex in $D^b_{n}(R)$ in terms of the vanishing of $n$-derived functors.
	\begin{corollary}
		Let $X^{\bullet}\in D^b_{n}(R).$ Then
		\begin{center}
			$n$-$\pd_RX^{\bullet}=\sup\{i \in\mathbb{Z}\mid \next^i_ R(X^{\bullet}, N) = 0$ for some $N\in R$-$\Mod\}$,
		\end{center}
		\begin{center}
			$n$-$\id_RX^{\bullet} = \sup\{i\in\mathbb{Z}\mid \next^i_ R(M,X^{\bullet}) = 0$ for some $M\in R$-$\Mod\}$.
		\end{center}	
	\end{corollary}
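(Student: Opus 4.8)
The plan is to reduce both identities to a single ``bounded by $m$'' criterion read off from Theorems~\ref{c-fpn} and~\ref{c-fin}. I give the argument for the first identity, namely the equality $n$-$\pd_R X^\bullet=\sup\{\,i\in\mathbb{Z}\mid \next^i_R(X^\bullet,N)\neq 0\text{ for some }N\in R\mbox{-}\Mod\,\}$; the $n$-injective identity is dual.

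The heart of the matter is the following claim: \emph{for every integer $m$}, one has $n$-$\pd_R X^\bullet\leq m$ if and only if $\next^i_R(X^\bullet,N)=0$ for all $i>m$ and all $N\in R\mbox{-}\Mod$. Granting this, $n$-$\pd_R X^\bullet$ is the least integer $m$ for which the set $S:=\{\,i\in\mathbb{Z}\mid \next^i_R(X^\bullet,N)\neq 0\text{ for some }N\,\}$ has no element exceeding $m$, and this least $m$ is precisely $\sup S$; the two extreme cases $n$-$\pd_R X^\bullet=-\infty$ (equivalently, $X^\bullet$ is an $n$-exact complex and $S=\emptyset$) and $n$-$\pd_R X^\bullet=\infty$ ($S$ unbounded above) are absorbed into the same equivalence read over all integers $m$.

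For the ``only if'' direction of the claim I would apply the implication $(1)\Rightarrow(4)$ of Theorem~\ref{c-fpn} with the second variable taken to be a stalk module $N$: since $\supn N=0$, this yields $\next^i_R(X^\bullet,N)=0$ for every $i>m$ and every $N$. For the ``if'' direction I would verify the two hypotheses of the implication $(5)\Rightarrow(1)$ of Theorem~\ref{c-fpn}. The vanishing $\next^{m+1}_R(X^\bullet,N)=0$ for all $N$ is part of the assumption. To obtain $\infn X^\bullet\geq -m$ I would replay the computation used in the proof of $(4)\Rightarrow(5)$ in Theorem~\ref{c-fpn}: for any right $R$-module $M$ the tensor-hom adjunction gives $\H^i\big((M\otimes_R X^\bullet)^{+}\big)=\H^i\big(\Hom_R(X^\bullet,M^{+})\big)=\next^i_R(X^\bullet,M^{+})$, which vanishes for $i>m$ by the assumption applied to $N=M^{+}$; hence $M\otimes_R X^\bullet$ is exact in degrees $<-m$ for every $M$, and \cite[Proposition~5.9]{ZZW} then forces $X^\bullet$ to be $n$-exact in degrees $<-m$, that is, $\infn X^\bullet\geq -m$. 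Theorem~\ref{c-fpn} now gives $n$-$\pd_R X^\bullet\leq m$, which proves the claim.

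The second identity follows in exactly the same way, with Theorem~\ref{c-fin} in place of Theorem~\ref{c-fpn} and the dual of \cite[Proposition~5.9]{ZZW}, so I would simply write ``dually''. I expect the only genuinely non-formal step to be the passage, in the ``if'' direction of the claim, from the vanishing of $\next^i_R(X^\bullet,-)$ for $i>m$ to the inequality $\infn X^\bullet\geq -m$; this runs through character modules and \cite[Proposition~5.9]{ZZW} and is taken over intact from the proof of Theorem~\ref{c-fpn}. Everything else --- the reduction to the ``bounded by $m$'' criterion and the treatment of the values $\pm\infty$ --- is bookkeeping with the conventions fixed earlier in this section.
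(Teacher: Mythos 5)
Your proof is correct and follows the same route as the paper, whose own proof consists of the single line ``They follow from Theorem \ref{c-fpn} and Theorem \ref{c-fin}, respectively''; you have simply made explicit the reduction to the bounded-by-$m$ criterion via $(1)\Rightarrow(4)$ and $(5)\Rightarrow(1)$, including the character-module step needed to recover $\infn X^{\bullet}\geq -m$. You also correctly read the statement's ``$=0$'' as a typo for ``$\neq 0$'', which is the intended assertion.
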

	\begin{proof}
		They follow from Theorem \ref{c-fpn} and  Theorem \ref{c-fin}, respectively.
	\end{proof}

	\begin{remark}
		
		Let $N$ be an $R$-module and $m$ a nonnegative integer. Then $n$-$\id_RN\leq m$ if and only if $N$ has
		an $n$-injective coresolution of length $m$. Note that $n$-$\id_RN$ can also be defined to
		be the smallest positive integer $m$ such that $\next^{m+1}_R (M,N)=0$ for any module $M$, and set
		$n$-$\id_RN=\infty$ if no such $m$ exists. It is standard to show that these two definitions of $n$-injective dimension coincide with the definition of complexes. The $n$-projective dimension $n$-$\id_RM$ of an $R$-module can be clarified similarly.  
	\end{remark}
	
	The following result shows that the relative derived functors of $\Hom$ with respect to 
	$n$-projective modules can be interpreted as the morphisms in the corresponding relative derived
	category with respect to $n$-projective modules.
	
	\begin{proposition}\label{n-ext} Let $M$, $N$ be two $R$-modules. Then
		$$\next_{R}^{i}(M,N)\cong \Hom_{D^{b}_{n}(R)}(M,N[i]).$$
	\end{proposition}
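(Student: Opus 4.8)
The plan is to compute $\Hom_{\D^b_{n}(R)}(M,N[i])$ by resolving $M$ and then identifying morphisms out of a complex of $n$-projectives in the Verdier quotient with homotopy classes of cochain maps. First I would fix an $n$-projective resolution $f\colon P^{\bullet}\to M$ as in Remark \ref{ppr-M}, so that $P^{\bullet}\in\K^{-}(\fP_n)$, the functor $\Hom_R(P^{\bullet},-)$ preserves $n$-exact complexes by \cite[Lemma 4.8]{ZZW}, and $f$ is an $n$-quasi-isomorphism; hence $P^{\bullet}\cong M$ (viewing $M$ as a stalk complex) in $\D^b_{n}(R)$. By the very definition of the $n$-derived Hom we have $\next^i_R(M,N)=\H^i\Hom_R(P^{\bullet},N)=\Hom_{\K(R)}(P^{\bullet},N[i])$, so the proposition will follow once we show that the localization functor $\K(R)\to\D^b_{n}(R)$ induces a bijection
$$\Hom_{\K(R)}(P^{\bullet},Y^{\bullet})\longrightarrow\Hom_{\D^b_{n}(R)}(P^{\bullet},Y^{\bullet})$$
for $Y^{\bullet}=N[i]$ --- indeed for every $Y^{\bullet}$.

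The heart of the argument is the observation that for \emph{any} $n$-quasi-isomorphism $s\colon Z^{\bullet}\to P^{\bullet}$ there is a cochain map $t\colon P^{\bullet}\to Z^{\bullet}$ with $st\simeq\mathrm{id}_{P^{\bullet}}$. Indeed, a routine check gives a natural isomorphism $\Hom_R(P^{\bullet},\Cone^{\bullet}(s))\cong\Cone^{\bullet}(\Hom_R(P^{\bullet},s))$, and the left-hand complex is $n$-exact, hence acyclic, because $\Cone^{\bullet}(s)$ is $n$-exact and $\Hom_R(P^{\bullet},-)$ preserves $n$-exact complexes; therefore $\Hom_R(P^{\bullet},s)$ is a quasi-isomorphism of complexes of abelian groups, and passing to $\H^0=\Hom_{\K(R)}(P^{\bullet},-)$ lets us take for $t$ a preimage of $\mathrm{id}_{P^{\bullet}}$. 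Granting this, I would finish by calculus of fractions: a morphism $P^{\bullet}\to Y^{\bullet}$ in $\D^b_{n}(R)$ is represented by a roof $P^{\bullet}\xleftarrow{\,s\,}Z^{\bullet}\xrightarrow{\,u\,}Y^{\bullet}$ with $\Cone^{\bullet}(s)$ an $n$-exact complex, and since $st\simeq\mathrm{id}$ forces $[t]=s^{-1}$ in $\D^b_{n}(R)$, that roof is the image of the genuine cochain map $u\circ t\colon P^{\bullet}\to Y^{\bullet}$, giving surjectivity; while if a cochain map $\phi\colon P^{\bullet}\to Y^{\bullet}$ dies in $\D^b_{n}(R)$, then $\phi\circ s'\simeq0$ for some $n$-quasi-isomorphism $s'\colon W^{\bullet}\to P^{\bullet}$, whence $\phi\simeq\phi(s't')=(\phi s')t'\simeq0$ for a splitting $t'$ of $s'$ as above, giving injectivity. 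Combining, $\next^i_R(M,N)\cong\Hom_{\K(R)}(P^{\bullet},N[i])\cong\Hom_{\D^b_{n}(R)}(P^{\bullet},N[i])\cong\Hom_{\D^b_{n}(R)}(M,N[i])$.

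The only step that needs real care is this splitting property: the Hom-preservation condition built into the notion of $n$-projective resolution must force every $n$-quasi-isomorphism into $P^{\bullet}$ to become invertible after composing with a cochain map out of $P^{\bullet}$, which is the relative analogue of bounded-above complexes of projectives being $\K$-projective; the ingredients are exactly \cite[Lemma 4.8]{ZZW} together with the results on $n$-quasi-isomorphisms of complexes of $n$-projectives in \cite[Propositions 4.6 and 4.9]{ZZW}. A minor bookkeeping point is that $P^{\bullet}$ lies in $\K^{-}(\fP_n)$ rather than $\K^{b}(\fP_n)$, so the roofs above should be formed inside $\D^{-}_{n}(R)$ and one uses that $M,N\in\D^b_{n}(R)$ have the same Hom-groups computed there, exactly as in \cite[Proposition 5.10]{ZZW} and the remarks preceding Theorem \ref{c-fpn}.
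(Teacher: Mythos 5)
Your proof is correct and follows essentially the same route as the paper: both reduce to the chain $\next^i_R(M,N)\cong \H^i\Hom_R(P^{\bullet},N)\cong \Hom_{\K(R)}(P^{\bullet},N[i])\cong \Hom_{\D_{n}(R)}(P^{\bullet},N[i])\cong \Hom_{\D^b_{n}(R)}(M,N[i])$ via an $n$-projective resolution of $M$. The only difference is that the paper cites \cite[Propositions 5.5 and 5.7]{ZZW} for the middle isomorphism, whereas you unpack it by showing directly that every $n$-quasi-isomorphism into $P^{\bullet}$ splits up to homotopy; that argument is sound and is exactly what those cited results encode.
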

	\begin{proof}
		Let $P^{\bullet} \rightarrow M \rightarrow 0$ be an $n$-projective resolution of $M$. Then $P^{\bullet}\rightarrow M$ is an $n$-quasi-isomorphism, and $P^{\bullet}\cong M$ in $D^b_{n}(R)$. It follows by \cite[Proposition 5.5, Proposition 5.7]{ZZW} that  the following isomorphisms of abelian groups hold:
		\begin{align*}
			\next^i_R(M, N)&\cong \H^i \Hom_R(P^{\bullet},N) \\
			&\cong\Hom_{\K(R)}(P^{\bullet},N[i])\\
			&\cong\Hom_{\D_{n}(R)}(P^{\bullet}, N[i])\\
			&\cong\Hom_{\D^b_{n}(R)}(M,N[i]).
		\end{align*}
	\end{proof}
	
	We introduce the notion of the  $n$-global dimension of a given $R$.
	\begin{definition} The left $n$-global dimension of a ring $R$, denoted by $n\mbox{-}\gldim(R)$, is defined as
		\begin{align*}
			n\mbox{-}\gldim(R)&= \sup\{n\mbox{-}\pd_RM\mid M \in R\mbox{-}\Mod\} \\
			&=\sup\{n\mbox{-}\id_RM\mid M \in R\mbox{-}\Mod\}.
		\end{align*}
	\end{definition}
	
	Trivially, the left $0$-global dimension $0\mbox{-}\gldim(R)$ of a ring $R$ is exactly the global dimension $\gldim(R)$ of $R$. The following results are easily to obtain.
	\begin{proposition}\label{inqfn} Let $R$ be a ring. Then the following statements hold.
		\begin{enumerate}
			\item If $m\geq n$, then 	$n\mbox{-}\gldim(R)\geq m\mbox{-}\gldim(R)$.
			\item If $l.\fPD(R)\leq n$,  then $n\mbox{-}\gldim(R)= m\mbox{-}\gldim(R)$ for any $m\geq n$.
			\item Suppose $n\mbox{-}\gldim(R)=k$. Then $k\leq \gldim(R)\leq n+k$.
		\end{enumerate}
	\end{proposition}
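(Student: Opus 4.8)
The plan is to treat the three parts in the order stated, reducing (1) and (3) to pointwise comparisons of the relevant projective dimensions and then passing to suprema over $M\in R\mbox{-}\Mod$.

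For (1), I would prove $m\mbox{-}\pd_RM\leq n\mbox{-}\pd_RM$ for every module $M$ and then take the supremum over $M$. The only structural inputs are the inclusions $\fP_n\subseteq\fP_m$ and $\Ex_m\subseteq\Ex_n$ (equivalently $\mathscr{E}_m\subseteq\mathscr{E}_n$). Assuming $n\mbox{-}\pd_RM=d<\infty$, Theorem \ref{c-fpn} supplies an $n$-projective resolution $P^{\bullet}\to M$ with $P^{i}=0$ for $i<-d$, so every $P^{i}$ lies in $\fP_n\subseteq\fP_m$. I would then verify that this same $P^{\bullet}\to M$ is an $m$-projective resolution of $M$: its terms are $m$-projective, and the defining property that $\Hom_R(P^{\bullet},-)$ carries $n$-exact complexes to $n$-exact complexes should, in conjunction with $\Ex_m\subseteq\Ex_n$ and $\fP_n\subseteq\fP_m$, yield the same statement for $m$-exact complexes (and that the augmentation remains exact after applying $\Hom_R(Q,-)$ for every $Q\in\fP_m$); hence $m\mbox{-}\pd_RM\leq d$. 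Summing over all $M$ gives $n\mbox{-}\gldim(R)\geq m\mbox{-}\gldim(R)$. I expect the upgrade of $\mathscr{E}_n$-properness to $\mathscr{E}_m$-properness of the chosen resolution to be the one genuinely delicate point: the naive ``same resolution works'' argument has to be justified carefully, and it is the only place where the hypothesis $m\geq n$ is really used. Should a direct check prove awkward, the fallback is to apply the comparison theorem for relative projective resolutions (available because $\fP_n\subseteq\fP_m$) to produce a natural morphism relating $m\mbox{-}\next^{\ast}_R(M,-)$ and $n\mbox{-}\next^{\ast}_R(M,-)$, and to transport the vanishing of $n\mbox{-}\next^{d+1}_R(M,-)$ onto $m\mbox{-}\next^{d+1}_R(M,-)$, invoking Theorem \ref{c-fpn} at both ends.

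For (2), I would invoke Theorem \ref{fpd-exact-seq}: the hypothesis $l.\fPD(R)\leq n$ gives $\fP_n=\fP_m$ and $\mathscr{E}_n=\mathscr{E}_m$ (hence $\Ex_n=\Ex_m$) for every $m\geq n$. Thus the exact structure, the class of relative projectives, and the notion of quasi-isomorphism underlying $n$- and $m$-projective dimension all coincide, so $n\mbox{-}\pd_RM=m\mbox{-}\pd_RM$ for every $M$; taking suprema yields $n\mbox{-}\gldim(R)=m\mbox{-}\gldim(R)$ for all $m\geq n$. (Alternatively, combine part (1) with this collapse of the exact structures.)

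For (3), I would read the estimate directly off Proposition \ref{pd-fpd}(1): every module $M$ satisfies $n\mbox{-}\pd_RM\leq\pd_RM\leq n+n\mbox{-}\pd_RM$. When $k=n\mbox{-}\gldim(R)$ is finite, all $n$-projective dimensions are bounded by $k$, and taking the supremum over $M$ gives $k\leq\gldim(R)\leq n+k$; the case $k=\infty$ is immediate since $\pd_RM\geq n\mbox{-}\pd_RM$ for every $M$. No obstacle is expected for parts (2) and (3); the whole content of the proposition is concentrated in the monotonicity of part (1).
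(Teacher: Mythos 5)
Your treatments of (2) and (3) coincide with the paper's: it disposes of (2) by citing Theorem \ref{fpd-exact-seq} and of (3) by citing Proposition \ref{pd-fpd}, and your expanded versions of those citations are correct. The problem is part (1), which the paper dismisses as ``trivial'' with no argument, and where the step you yourself flag as delicate genuinely fails. An $n$-projective resolution $P^{\bullet}\to M$ is an $\mathscr{E}_n$-exact complex with terms in $\fP_n$; for it to serve as an $m$-projective resolution it would have to be $\mathscr{E}_m$-exact, equivalently to stay exact under $\Hom_R(Q,-)$ for every $Q\in\fP_m$. For $m\geq n$ both inclusions point the wrong way for this upgrade: $\mathscr{E}_m\subseteq\mathscr{E}_n$ means $m$-exactness is a \emph{stronger} condition than the $n$-exactness you are given, and $\fP_m\supseteq\fP_n$ means the resolution must survive a \emph{larger} test class of functors. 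Nothing in the hypotheses supplies either. Your fallback fares no better: the comparison theorem yields a chain map $P^{\bullet}\to Q^{\bullet}$ over an $m$-projective resolution $Q^{\bullet}$ (because $Q^{\bullet}\to M$ is in particular $n$-exact and the $P^{i}$ are $n$-projective), hence a map $\H^{i}\Hom_R(Q^{\bullet},N)\to \H^{i}\Hom_R(P^{\bullet},N)$ from the $m$-relative to the $n$-relative Ext group; vanishing of the target says nothing about the source, and the lift in the opposite direction is unavailable because the $Q^{i}$ need not be projective relative to $\mathscr{E}_n$-sequences. A Schanuel-type comparison of syzygies breaks at exactly the same point.

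The gap is not merely expository. For a tame hereditary finite-dimensional algebra $R$ (e.g.\ the Kronecker algebra) one has $\mathscr{E}_1=\mathscr{E}_{fp}$ and $\fP_1$ equal to the class of pure-projective modules, as the paper itself records in Remark \ref{str-fpr}, so $1\mbox{-}\gldim(R)$ is the pure global dimension of $R$. Over an uncountable base field this is known to exceed $1=\gldim(R)=0\mbox{-}\gldim(R)$ (results of Baer--Brune--Lenzing and Okoh on pure global dimensions of tame hereditary algebras), which would contradict (1) with $n=0$ and $m=1$. So before attempting to close the gap you should first decide whether the monotonicity in (1) is true at all; at minimum it requires an argument that neither you nor the paper supplies, and the remark following the proposition (which asserts $1\mbox{-}\gldim(R)=1$ for every hereditary algebra of infinite representation type) appears to rest on the same unproved step.
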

	\begin{proof}
		(1) is trivial, (2) follows by Theorem \ref{fpd-exact-seq}, and (3) follows by Proposition \ref{pd-fpd}.
	\end{proof}
	
	\begin{remark} We consider some examples on the inequality of Proposition \ref{inqfn}(3).	
		\begin{enumerate}
			\item  Let $R$ be a finite-dimensional hereditary algebra of finite-representation type. Then every $R$-module is $1$-projective, and so $1\mbox{-}\gldim(R)=0$. Hence  $0=1\mbox{-}\gldim(R)<\gldim(R)=1+1\mbox{-}\gldim(R)=1$.
			\item  Let $R$ be a finite-dimensional hereditary algebra of infinite-representation type. Then there exists an  $R$-module which is not $1$-projective, and so $1\mbox{-}\gldim(R)=1$. Hence  $1=1\mbox{-}\gldim(R)=\gldim(R)<1+1\mbox{-}\gldim(R)=2$.
		\end{enumerate}
	\end{remark}
	
	%\begin{theorem} Let $R$ be a ring. Then  $l.\fPD(R)\leq n$ if and only if $n\mbox{-}\gldim(R)= m\mbox{-}\gldim(R)$ for any $m\geq n$.\end{theorem}\begin{proof}It follows by Theorem \ref{fpd-exact-seq} that if $l.\fPD(R)\leq n$, then $n\mbox{-}\gldim(R)= m\mbox{-}\gldim(R)$ for any (some) $m> n$.On the other hand, let $m> n$. Suppose $M\in  \Proj^{<m}(R)$.   \end{proof}

	Moreover, we have the following characterizations of  $n$-global dimensions.
	\begin{theorem} Let $R$ be a ring with $n\mbox{-}\gldim(R)<\infty$. Then the following statement holds.
		\begin{center}
			$n\mbox{-}\gldim(R)=\sup\{n$-$\id_RP\mid P\in\fP_n\}=\sup\{n$-$\pd_RI\mid I\in \fI_n\}$.
		\end{center}
	\end{theorem}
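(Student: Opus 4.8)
The plan is to prove both equalities by observing that ``$\leq$'' is automatic and then exhibiting, for $d:=n\mbox{-}\gldim(R)$, an $n$-projective module of $n$-injective dimension exactly $d$ and an $n$-injective module of $n$-projective dimension exactly $d$. For the easy halves, I would note that $\sup\{n\mbox{-}\id_RP\mid P\in\fP_n\}\leq d$ and $\sup\{n\mbox{-}\pd_RI\mid I\in\fI_n\}\leq d$ hold directly from the definition of $n\mbox{-}\gldim(R)$, since $n\mbox{-}\id_RM\leq d$ and $n\mbox{-}\pd_RM\leq d$ for every $R$-module $M$. The case $d=0$ is trivial (then $\fP_n=\fI_n=R\mbox{-}\Mod$ and both suprema equal $0$), so one may assume $d\geq 1$; moreover, since $d<\infty$ and $n\mbox{-}\pd_RM$ is a nonnegative integer for every nonzero module $M$, the supremum defining $n\mbox{-}\gldim(R)$ is attained, so I would fix $M$ with $n\mbox{-}\pd_RM=d$ and, dually, $N$ with $n\mbox{-}\id_RN=d$.

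For the first equality, the plan is to take, via Theorem \ref{c-fpn}, an $n$-projective resolution $0\to P_d\xrightarrow{\iota}P_{d-1}\to\cdots\to P_0\to M\to 0$ of length $d$ with all $P_i\in\fP_n$ (start from any $n$-projective resolution, which exists by Remark \ref{ppr-M}, and split off the contractible summand supplied by Theorem \ref{c-fpn}(3)), and to show $n\mbox{-}\id_RP_d=d$. Computing $\next$ through this resolution, as in Proposition \ref{n-ext}, one gets
$$\next^d_R(M,P_d)\;=\;\Hom_R(P_d,P_d)\,\big/\,\im\!\big(\Hom_R(P_{d-1},P_d)\xrightarrow{\ \iota^{*}\ }\Hom_R(P_d,P_d)\big),$$
the kernel at degree $d$ being all of $\Hom_R(P_d,P_d)$ since $P_{d+1}=0$. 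The key step is to prove this group is nonzero: if it vanished, then $\Id_{P_d}=g\circ\iota$ for some $g\colon P_{d-1}\to P_d$, so $\iota$ would be a split monomorphism, $\Coker(\iota)$ a direct summand of $P_{d-1}$, hence $\Coker(\iota)\in\fP_n$ (the projectives of the exact structure $\mathscr{E}_n$ being closed under direct summands); splicing $\Coker(\iota)$ into the truncated resolution would then give an $n$-projective resolution of $M$ of length $d-1$, contradicting $n\mbox{-}\pd_RM=d$. With $\next^d_R(M,P_d)\neq 0$ in hand, the $\next$-characterization of $n$-injective dimension gives $n\mbox{-}\id_RP_d\geq d$, and together with $n\mbox{-}\id_RP_d\leq d$ this yields $n\mbox{-}\id_RP_d=d$, so $\sup\{n\mbox{-}\id_RP\mid P\in\fP_n\}=d$.

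For the second equality I would argue dually using Theorem \ref{c-fin}: pick an $n$-injective coresolution $0\to N\to I^0\to\cdots\to I^{d-1}\xrightarrow{\pi}I^d\to 0$ of length $d$ with each $I^j\in\fI_n$, and compute $\next^d_R(I^d,N)$ from the coresolution of the second variable. The same argument shows this group is nonzero — otherwise $\pi$ would split, $\Ker(\pi)$ would lie in $\fI_n$, and the coresolution would shorten, contradicting $n\mbox{-}\id_RN=d$ — whence $n\mbox{-}\pd_RI^d=d$ with $I^d\in\fI_n$.

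I expect the only real obstacle to be the nonvanishing of $\next^d_R(M,P_d)$ (and of $\next^d_R(I^d,N)$); the remaining steps are the classical dimension-shift bookkeeping transported to the exact structure $\mathscr{E}_n$ through Theorem \ref{c-fpn}, Theorem \ref{c-fin} and Proposition \ref{n-ext}. A minor point to verify along the way is that the shortened complexes are still $n$-projective (resp.\ $n$-injective) resolutions, which holds because split-exact complexes are $n$-exact and because $n$-exact complexes, like the exact structure $\mathscr{E}_n$ itself, are closed under retracts.
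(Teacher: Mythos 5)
Your proposal is correct and follows essentially the same route as the paper: both arguments reduce to showing that for a module $M$ attaining $n\mbox{-}\pd_RM=d=n\mbox{-}\gldim(R)$, the top term $P_d$ of a length-$d$ $n$-projective resolution satisfies $\next^d_R(M,P_d)\neq 0$, since vanishing would split the top of the resolution and shorten it. The paper phrases this via the dimension-shift isomorphism $\next^1_R(\Coker(d_m),P_m)\cong\next^m_R(M,P_m)$ and a contradiction with $d<m$, whereas you compute the same Ext group directly as a quotient of $\Hom$-groups; the content is identical.
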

	
	\begin{proof}
		We will only prove the first equality since the other one is similar. Denote by $\sup\{n$-$\id_RP\mid P\in\fP_n\}=d$ and  $n\mbox{-}\gldim(R)=m$. Trivially, $m\geq d$. We can assume that $d$ is finite. It is
		sufficient to prove that $m \leq d$ whenever $m$ is finite. Let
		$$0\rightarrow P_m \xrightarrow{d_m} P_{m-1}\rightarrow \cdots \rightarrow P_1\xrightarrow{d_1} P_0\xrightarrow{d_0}  M \rightarrow  0$$
		be an $n$-projective resolution of $M$. On contrary, assume that $d<m$. Then
		$$\next^1_R(\Coker({d_m}),P_m)\cong \next^m_R (M,P_m)$$
		and the second member is zero by our assumption. Consequently, the sequence
		$$0\rightarrow P_m\xrightarrow{d_m} P_{m-1}\rightarrow \Coker({d_m})\rightarrow 0$$
		splits, which contradicts the assumption $n$-$\pd_RM = m$. And thus $m\leq d$.
	\end{proof}

	Finally, we can characterize the $n$-global dimension of rings in terms of complexes.
	
	\begin{theorem} Let $m\in \mathbb{N}$. Then the following statements are equivalent.
		\begin{enumerate}
			\item $n\mbox{-}\gldim(R)\leq m$.
			\item $n$-$\pd_RX^{\bullet}\leq m-\infn X^{\bullet}$ for any $X^{\bullet}\in \D^b_{n}(R)$.
			\item $n$-$\id_RY^{\bullet}\leq m+\supn Y^{\bullet}$ for any $Y^{\bullet}\in \D^b_{n}(R)$.
			\item $\next^i_R(X^{\bullet},Y^{\bullet}) = 0$ for any $X^{\bullet}, Y^{\bullet}\in \D^b_{n}(R)$ and $i > m + \supn Y^{\bullet}-\infn X^{\bullet}$.
		\end{enumerate}
	\end{theorem}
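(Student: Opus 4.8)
The plan is to prove the cycle $(1)\Rightarrow(4)\Leftrightarrow(2)\Rightarrow(1)$ together with $(4)\Leftrightarrow(3)$, so that the only substantial point is $(1)\Rightarrow(4)$. The equivalences $(2)\Leftrightarrow(4)$ and $(3)\Leftrightarrow(4)$ are formal: fix $X^{\bullet}\in\D^b_{n}(R)$ and apply Theorem~\ref{c-fpn}(4) with $m-\infn X^{\bullet}$ in the role of $m$; this says exactly that $n$-$\pd_RX^{\bullet}\le m-\infn X^{\bullet}$ holds if and only if $\next^i_R(X^{\bullet},Y^{\bullet})=0$ for all $Y^{\bullet}\in\D^b_{n}(R)$ and all $i>(m-\infn X^{\bullet})+\supn Y^{\bullet}$. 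Quantifying over $X^{\bullet}$ yields $(2)\Leftrightarrow(4)$ (the $n$-exact case is harmless, since there $\infn X^{\bullet}=\infty$, $n$-$\pd_RX^{\bullet}=-\infty$ and $\next^i_R(X^{\bullet},-)=0$); the same argument applied to Theorem~\ref{c-fin}(4), with $m+\supn Y^{\bullet}$ in the role of $m$, gives $(3)\Leftrightarrow(4)$. For $(2)\Rightarrow(1)$, evaluate $(2)$ on stalk complexes: if $M$ is an $R$-module then $\infn M=0$ and the $n$-projective dimension of $M$ as a complex agrees with that of $M$ as a module, so $(2)$ forces $n$-$\pd_RM\le m$ for every $M$, i.e.\ $n\mbox{-}\gldim(R)\le m$.

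For $(1)\Rightarrow(4)$ I would induct on the amplitude $\supn X^{\bullet}-\infn X^{\bullet}$, using the canonical truncation triangles of $\D^b_{n}(R)$ built on \cite[Proposition~5.6]{ZZW}: for $X^{\bullet}$ with $\supn X^{\bullet}=t$ there is a triangle $\tau_{\le t-1}X^{\bullet}\to X^{\bullet}\to H^t_n(X^{\bullet})[-t]\to(\tau_{\le t-1}X^{\bullet})[1]$ in $\D^b_{n}(R)$, where $H^t_n(X^{\bullet})$ is an $R$-module sitting in degree $t$ and $\tau_{\le t-1}X^{\bullet}$ has $\supn\le t-1$ and the same $\infn$ as $X^{\bullet}$. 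The $n$-exact case is vacuous. If $\supn X^{\bullet}=\infn X^{\bullet}=s$, the truncation machinery identifies $X^{\bullet}$ with $\underline{H}[-s]$ for the module $H=H^s_n(X^{\bullet})$; then the shift identity $\next^i_R(\underline H[-s],Y^{\bullet})\cong\next^{i+s}_R(\underline H,Y^{\bullet})$ (from $\next^i_R(A,B)\cong\Hom_{\D^b_{n}(R)}(A,B[i])$, the complex-level form of Proposition~\ref{n-ext}), together with $n$-$\pd_RH\le m$ and Theorem~\ref{c-fpn}(4), gives $\next^i_R(X^{\bullet},Y^{\bullet})=0$ for $i>m+\supn Y^{\bullet}-s$, which is the base case. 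For the inductive step, apply $\next^{\bullet}_R(-,Y^{\bullet})$ to the triangle to obtain the long exact sequence linking $\next^i_R(H^t_n(X^{\bullet})[-t],Y^{\bullet})$, $\next^i_R(X^{\bullet},Y^{\bullet})$ and $\next^i_R(\tau_{\le t-1}X^{\bullet},Y^{\bullet})$. By the shift identity and Theorem~\ref{c-fpn}(4) applied to the module $H^t_n(X^{\bullet})$, the left-hand term vanishes for $i>m+\supn Y^{\bullet}-t$; by the inductive hypothesis the right-hand term vanishes for $i>m+\supn Y^{\bullet}-\infn X^{\bullet}$. Since amplitude $>0$ forces $\infn X^{\bullet}\le t-1$, hence $-t\le-\infn X^{\bullet}$, both outer terms vanish whenever $i>m+\supn Y^{\bullet}-\infn X^{\bullet}$, and exactness of the sequence then gives $\next^i_R(X^{\bullet},Y^{\bullet})=0$ there, closing the induction. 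Finally $(4)\Rightarrow(1)$ is subsumed in $(4)\Rightarrow(2)\Rightarrow(1)$.

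The step I expect to be the real obstacle is the truncation input: one must know that $\D^b_{n}(R)$ admits canonical truncation triangles whose outer terms have the predicted $\infn$ and $\supn$, and in particular that a complex with a single relative cohomology concentrated in one degree is isomorphic in $\D^b_{n}(R)$ to the stalk of an $R$-module there. This is more delicate than in the classical derived category: for an $n$-projective resolution $P^{\bullet}$ of $X^{\bullet}$, the naive brutal truncation $\sigma^{\le t}P^{\bullet}$ need not be $n$-quasi-isomorphic to a stalk complex, because the boundary short exact sequence at degree $t=\supn X^{\bullet}$ can fail to be $n$-exact precisely there. Once the correct relative truncation is in hand, the remaining ingredients — the identification $\next^i_R(A,B)\cong\Hom_{\D^b_{n}(R)}(A,B[i])$, the shift identity, the long exact sequence, and the inequality $-t\le-\infn X^{\bullet}$ — are routine.
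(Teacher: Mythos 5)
Your formal reductions are sound and agree with the paper's: $(2)\Leftrightarrow(4)$ and $(3)\Leftrightarrow(4)$ do follow from Theorem \ref{c-fpn}(4) and Theorem \ref{c-fin}(4) by reparametrizing $m$, and $(2)\Rightarrow(1)$ via stalk complexes is the paper's $(4)\Rightarrow(1)$ in disguise. The problem is the one implication carrying all the content, $(1)\Rightarrow(4)$, and it sits exactly at the point you flag and then defer: the existence of truncation triangles $\tau_{\le t-1}X^{\bullet}\to X^{\bullet}\to H^t_n(X^{\bullet})[-t]\to(\tau_{\le t-1}X^{\bullet})[1]$ in $\D^b_{n}(R)$ with a module stalk as third term. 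In the relative setting there is no such cohomology module in general: a complex can fail to be $n$-exact at a single degree while being \emph{exact} there, the failure being only that the boundary short exact sequence at that degree does not lie in $\mathscr{E}_n$. Concretely, for a two-term complex $0\to A\xrightarrow{f}B\to 0$ with $f$ injective and $0\to A\to B\to\Coker f\to 0$ exact but not $n$-exact, the natural map to the stalk $\Coker f$ has cone precisely this non-$n$-exact sequence, so it is not an $n$-quasi-isomorphism; thus your base case ``$\supn X^{\bullet}=\infn X^{\bullet}=s$ implies $X^{\bullet}\cong H[-s]$'' already fails for the only candidate $H$, and the same obstruction prevents the cone of $\tau_{\le t-1}X^{\bullet}\to X^{\bullet}$ from being a stalk in the inductive step. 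Since you explicitly leave this input unproved, the induction does not close.

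The paper avoids the issue by never decomposing a complex into stalks. It proves $(1)\Rightarrow(3)$ directly: take an $n$-injective coresolution $Y^{\bullet}\to I^{\bullet}$ and truncate at the single extreme degree $t=\supn Y^{\bullet}$, where the relative structure is under control --- $I^{\bullet}$ is $n$-exact in degrees $\ge t+1$, so $0\to\Ker(d^t_{I^{\bullet}})\to I^t\to I^{t+1}\to\cdots$ is an honest $n$-injective coresolution of the module $\Ker(d^t_{I^{\bullet}})$; hypothesis (1) bounds the $n$-injective dimension of this kernel by $m$, and splicing a length-$m$ coresolution of it onto $\cdots\to I^{t-2}\to I^{t-1}$ yields a bounded $n$-injective coresolution of $Y^{\bullet}$ of width $m+t$. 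Then $(1)\Rightarrow(2)$ is dual, $(2),(3)\Rightarrow(4)$ are the cited theorems, and $(4)\Rightarrow(1)$ is specialization to modules. To repair your argument you would have to replace the stalk subquotient by such a one-sided truncation of a chosen resolution at the extreme degree --- at which point you are reproducing the paper's splicing proof rather than an independent induction on amplitude.
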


	\begin{proof}
		$(2)\Rightarrow(4)$ and $(3)\Rightarrow(4)$ follow from Theorem \ref{c-fpn} and Theorem \ref{c-fin}, respectively.  $(4)\Rightarrow(1)$ follows by letting both $X^{\bullet}$ and $Y^{\bullet}$ be $R$-modules. $(1)\Rightarrow(2)$ is the dual of $(1) \Rightarrow(3)$. 
		
		$(1)\Rightarrow (3)$
		Let $\supn Y^{\bullet} = t$ and $Y^{\bullet}\rightarrow I^{\bullet}$ be an $n$-injective coresolution of $Y^{\bullet}$ . Then $\supn I^{\bullet} = t$
		and $I^{\bullet}$ is $n$-exact in degree $\geq t + 1$. So
		$$0 \rightarrow \Ker( d^t_{I^{\bullet}}) \rightarrow I^t\rightarrow I^{t+1} \rightarrow \cdots $$
		is an $n$-injective coresolution of $\Ker (d^t_{I^{\bullet}})$. By $(1)$, we have $n$-$\id_R(\Ker (d^t_{I^{\bullet}}))\leq m$. Let
		$$0 \rightarrow \Ker(d^t_{I^{\bullet}}) \rightarrow K^0 \rightarrow K^1\rightarrow \cdots  \rightarrow K^m \rightarrow 0$$
		be an $n$-injective coresolution of $\Ker( d^t_{I^{\bullet}})$. Then it is easy to check that
		$$\cdots \rightarrow I^{t-2}\rightarrow I^{t-1} \rightarrow K^0 \rightarrow \cdots \rightarrow K^{m-1} \rightarrow K^m \rightarrow 0 \rightarrow \cdots$$
		is an $n$-injective coresolution of $Y^{\bullet}$ and $n$-$\id_RY^{\bullet}\leq m+t$.
	\end{proof}
	\
	\section{$n$-Singularity Categories}
	The singularity category of a given ring $R$, which was introduced by Buchweitz \cite{Buchweitz}, is defined to be the Verdier quotient $\D^b_{sg}(R):= \D^b(R)/\K^b(\Proj)$, where $\K^b(\Proj)$ is the bounded  homotopy category of all projective modules. Since $\D^b_{sg}(R) = 0$ if and only
	if every $R$-module has finite projective dimensions, if and only if $R$ has a finite global dimension. So the singularity category $\D^b_{sg}(R)$ reflects homological singularity of the
	ring $R$.

	In this section, we introduce and study  the notion of  $n$-singularity categories, 
	the relative singularity category with respect to $n$-exact structure. Set 
	\begin{equation*}
		\begin{aligned}
			&\K^{-,\boldsymbol{nb}}(\fP_n):=\{X^{\bullet}\in\K^{-}(\fP_n)\mid \infn X^{\bullet}\ \mbox{is finite} \}\mbox{, and}\\
			&\K^{+,\boldsymbol{nb}}(\fI_n):=\{X^{\bullet}\in\K^{-}(\fI_n)\mid \boldsymbol{sup_n}X^{\bullet}\ \mbox{is finite} \}.
		\end{aligned}
	\end{equation*}
	It follows by \cite[Theorem 5.11]{ZZW} that there is a triangle-equivalence $\D^b_{n}(R)\simeq \K^{-,\boldsymbol{nb}}(\fP_n)$,
	and one can verify as in \cite[Lemma 5.1]{ZZW} that $\K^b(\fP_n)$ is a thick subcategory of $\K^{-,\boldsymbol{nb}}(\fP_n)$. So we can  introduce  $n$-singularity categories as follows. 
	
	\begin{definition} Let $R$ be a ring. The $n$-singularity category of $R$ is defined to be the Verdier quotient
		$$\D^b_{n\mbox{-}sg}(R) := \D^b_{n}(R)/\K^b(\fP_n)\simeq \K^{-,\boldsymbol{nb}}(\fP_n)/\K^b(\fP_n).$$
	\end{definition}
	
	Since the $0$-pure exact structure is exactly the  exact structure, the $0$-pure singularity category is precisely the classical  singularity category. %\begin{theorem}\label{last}Assume that $\fP_n$ is closed under kernels of epimorphisms. There are  triangulated equivalences for each $n$:
		%$$\D^b(R)\cong \D^b_{n}(R)/\K^b_{ac}(\fP_n)\cong \K^{-,\boldsymbol{nb}}(\fP_n)/\K^b_{ac}(\fP_n).$$
%\end{theorem}
%\begin{proof}
%\end{proof}
	 The following result exhibits when the $n$-singularity category  $\D^b_{n\mbox{-}sg}(R)$ of a given ring $R$ vanishes.
	
	\begin{theorem}\label{finite=0} Let $R$ be a ring. Then $\D^b_{n\mbox{-}sg}(R) = 0$ if and only if $n\mbox{-}\gldim(R)$ is finite. Moreover, if $n\in\mathbb{N}$, then the above are also equivalent to  $\gldim(R)$ is finite $($which is also equivalent to $\D^b_{sg}(R) = 0).$
	\end{theorem}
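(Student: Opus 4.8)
The plan is to work through the model $\D^b_{n\mbox{-}sg}(R)\simeq\K^{-,\boldsymbol{nb}}(\fP_n)/\K^b(\fP_n)$. Since $\K^b(\fP_n)$ is a thick subcategory of $\K^{-,\boldsymbol{nb}}(\fP_n)\simeq\D^b_n(R)$, the Verdier quotient is zero exactly when $\K^b(\fP_n)=\K^{-,\boldsymbol{nb}}(\fP_n)$, i.e.\ when every $X^\bullet\in\D^b_n(R)$ is isomorphic in the homotopy category to a bounded complex of $n$-projectives; by the definition of $n$-projective dimension (and Theorem~\ref{c-fpn}) this amounts to saying that $n\mbox{-}\pd_RX^\bullet<\infty$ for every $X^\bullet\in\D^b_n(R)$. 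So the whole statement reduces to the equivalence ``$n\mbox{-}\pd_RX^\bullet<\infty$ for all $X^\bullet\in\D^b_n(R)$'' $\iff$ ``$n\mbox{-}\gldim(R)<\infty$'', together with the classical comparison when $n$ is finite.

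For the implication $n\mbox{-}\gldim(R)=d<\infty\Rightarrow\D^b_{n\mbox{-}sg}(R)=0$, take $X^\bullet\in\D^b_n(R)$. If $X^\bullet$ is $n$-exact it is zero in $\D^b_n(R)$; otherwise $\infn X^\bullet$ is a finite integer since $X^\bullet$ is bounded, and the characterization of $n\mbox{-}\gldim(R)\le d$ in terms of complexes (the last theorem of Section~3) gives $n\mbox{-}\pd_RX^\bullet\le d-\infn X^\bullet<\infty$. Choosing an $n$-projective resolution $(P')^\bullet\to X^\bullet$ with $(P')^\bullet\in\K^-(\fP_n)$ and applying Theorem~\ref{c-fpn}, we decompose $(P')^\bullet=(P_1)^\bullet\oplus(P_2)^\bullet$ with $(P_1)^\bullet$ bounded and $(P_2)^\bullet$ contractible; hence $X^\bullet\cong(P_1)^\bullet\in\K^b(\fP_n)$. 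Thus $\K^b(\fP_n)=\K^{-,\boldsymbol{nb}}(\fP_n)$ and $\D^b_{n\mbox{-}sg}(R)=0$.

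For the converse, assume $\D^b_{n\mbox{-}sg}(R)=0$, i.e.\ $\K^b(\fP_n)=\K^{-,\boldsymbol{nb}}(\fP_n)$. First I would check that every $R$-module $M$ has finite $n$-projective dimension: an $n$-projective resolution $P^\bullet\to M$ with $P^\bullet\in\K^-(\fP_n)$ (Remark~\ref{ppr-M}) has $\infn P^\bullet=\infn M=0$ for $M\ne0$, so $P^\bullet\in\K^{-,\boldsymbol{nb}}(\fP_n)=\K^b(\fP_n)$; that is, $P^\bullet$ is homotopy equivalent to a bounded complex of $n$-projectives, which is again an $n$-projective resolution of $M$, whence $n\mbox{-}\pd_RM<\infty$. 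To turn this into a uniform bound I would use that $n$-projective dimension commutes with arbitrary direct sums, namely $n\mbox{-}\pd_R\big(\bigoplus_iM_i\big)=\sup_i n\mbox{-}\pd_RM_i$ — which follows from $\fP_n$ being closed under direct sums together with the isomorphism $\next^\bullet_R\big(\bigoplus_iM_i,N\big)\cong\prod_i\next^\bullet_R(M_i,N)$ and criterion~(5) of Theorem~\ref{c-fpn}. If $n\mbox{-}\gldim(R)$ were infinite, one could pick modules $M_k$ with $n\mbox{-}\pd_RM_k\ge k$ for every $k\in\mathbb N$, and then $\bigoplus_kM_k$ would have infinite $n$-projective dimension, contradicting the first step. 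Hence $n\mbox{-}\gldim(R)<\infty$.

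Finally, for the ``moreover'' clause with $n\in\mathbb N$: if $n\mbox{-}\gldim(R)<\infty$ then $\gldim(R)\le n+n\mbox{-}\gldim(R)<\infty$ by Proposition~\ref{inqfn}(3); conversely $n\mbox{-}\pd_RM\le\pd_RM$ for every $M$ because projective modules are $n$-projective (cf.\ Proposition~\ref{pd-fpd}), so $n\mbox{-}\gldim(R)\le\gldim(R)$; combined with the classical fact recalled at the start of this section that $\D^b_{sg}(R)=0\iff\gldim(R)<\infty$, this yields the last assertion. I expect the only genuinely delicate point to be the passage from ``every module has finite $n\mbox{-}\pd$'' to the uniform bound $n\mbox{-}\gldim(R)<\infty$, i.e.\ the direct-sum behaviour of $n$-projective dimension; the remaining steps are just a matter of feeding the structural results of Section~3 into the standard formalism of Verdier quotients by thick subcategories.
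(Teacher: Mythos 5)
Your argument is correct and, in its broad outline (reduce everything to finiteness of $n$-projective dimension via Theorem~\ref{c-fpn}, then compare with $\gldim(R)$ through Proposition~\ref{pd-fpd}), parallels the paper's proof; but the two halves are executed differently, and in one place you are more careful than the paper. For the converse direction the paper does not invoke the abstract fact that a Verdier quotient by a thick subcategory vanishes iff the subcategory is everything: instead it unravels the isomorphism $M\cong X^{\bullet}$ ($X^{\bullet}\in\K^b(\fP_n)$) as a right fraction, produces an honest $n$-quasi-isomorphism $X^{\bullet}\to M$, passes to the soft truncation $X_{0\supset}$, and checks by splitting off $n$-projectives from the right that $\Ker(d^0_{X^{\bullet}})$ is $n$-projective, so that $M$ acquires an explicit bounded $n$-projective resolution. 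Your route — identifying the kernel of the quotient functor with $\K^b(\fP_n)$ and concluding that any $n$-projective resolution of $M$ is already homotopy equivalent to a bounded one — is shorter and avoids the calculus of fractions, at the cost of leaning on the thickness of $\K^b(\fP_n)$ inside $\K^{-,\boldsymbol{nb}}(\fP_n)$, which the paper only asserts. The genuinely new ingredient in your write-up is the direct-sum step: the paper's proof stops at ``every module has finite $n$-projective dimension'' and silently treats this as the finiteness of the supremum $n\mbox{-}\gldim(R)$, whereas you supply the missing uniform bound via $n\mbox{-}\pd_R(\bigoplus_k M_k)=\sup_k n\mbox{-}\pd_R M_k$. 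That identity does hold, but you should record why the direct sum of $n$-projective resolutions is again an $n$-projective resolution: the test objects in $\Proj^{<n+1}$ are finitely presented, so a direct sum of $n$-exact complexes is $n$-exact, and $\Hom_R(\bigoplus_k P_k^{\bullet},-)\cong\prod_k\Hom_R(P_k^{\bullet},-)$ still preserves $n$-exact complexes; criterion (5) of Theorem~\ref{c-fpn} then does the rest. With that check made explicit, your proof is complete and in fact closes a small gap in the published argument.
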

	\begin{proof} Suppose the $n$-global dimension $n\mbox{-}\gldim(R)$ of $R$ is finite.  Then it follows from Theorem \ref{c-fpn} that for any $X^{\bullet} \in \D^b_{n}(R)$, there exists an $n$-quasi-isomorphism $P^{\bullet}\rightarrow X^{\bullet}$ with
		$P^{\bullet} \in \K^b(\fP_n)$. So $\D^b_{n}(R)\simeq \K^b(\fP_n)$, and hence $\D^b_{n\mbox{-}sg}(R) = 0$.
		
		On the other hand, suppose $\D^b_{n\mbox{-}sg}(R) = 0$. Let $M$ be an $R$-module. Then
		$M\cong 0$ in $\D^b_{n\mbox{-}sg}(R)$, and so there exists some $X^{\bullet} \in \K^b(\fP_n)$ such that $M \cong X^{\bullet}$ in $\D^b_{n}(R)$.
		We denote this isomorphism by a right fraction
		$$\alpha/f :M\stackrel{f}{\Longleftarrow}Y^{\bullet}\stackrel{\alpha}{\longrightarrow} X^{\bullet},$$
		where $f$ and $\alpha$ are
		$n$-quasi-isomorphisms. Then there is a triangle $$Y^{\bullet} \xrightarrow{\alpha} X^{\bullet}\rightarrow \Cone^{\bullet}(\alpha)\rightarrow Y^{\bullet} [1]$$ in $\K(R)$
		with $\Cone^{\bullet}(\alpha)$ $n$-exact. By applying $\Hom_{\K(R)}(X^{\bullet},-)$ to it, we get an exact sequence
		$$\Hom_{\K(R)}(X^{\bullet},Y^{\bullet})\rightarrow \Hom_{\K(R)}(X^{\bullet},X^{\bullet})\rightarrow \Hom_{\K(R)}(X^{\bullet},\Cone^{\bullet}(\alpha)).$$
		It follows from \cite[Lemma 4.8]{ZZW} that $\Hom_{\K(R)}(X, \Cone^{\bullet}(\alpha)) = 0$, so there is a cochain map
		$\beta : X^{\bullet}\rightarrow Y^{\bullet}$ such that $\alpha\beta$ is homotopic to $\Id_{X^{\bullet}}$ . Thus we have an $n$-quasi-isomorphism
		$f\beta : X^{\bullet}\rightarrow M$.
		Consider the soft truncation $$X_{0\supset}:= \cdots  \rightarrow X^{-2} \rightarrow X^{-1} \rightarrow \Ker( d^0_{X^{\bullet}}) \rightarrow 0$$ of $X^{\bullet}.$ Then there
		is an $n$-quasi-isomorphism $f\beta\lambda : X_{0\supset} \rightarrow M$, where $\lambda : X_{0\supset} \hookrightarrow X^{\bullet}$ is the natural embedding map.
		Since $X^{\bullet}\in \K^b(\fP_n)$, we can  assume that there is an integer $m$ such that $X^i = 0$ for any $i > m$.
		Note that the sequence
		$$0 \rightarrow \Ker (d^0_{X^{\bullet}}) \rightarrow X^0 \rightarrow \cdots  \rightarrow X^{m-1} \rightarrow X^m \rightarrow 0$$
		is $n$-exact with each $X^i \in\fP_n$, and it follows that $\Ker (d^0_{X^{\bullet}})$ is also $n$-projective. Thus
		$M$ has a bounded $n$-projective resolution $X_{0\supset} \rightarrow M$, and so $n\mbox{-}\pd(M) < \infty$. 
		
		Now, let $n\in\mathbb{N}$. Then, by Proposition \ref{pd-fpd}, $n$-\pd$_RM\leq$\pd$_RM\leq n$-\pd$_RM+n$ for any $R$-module $M$.
		Hence the finiteness of $n\mbox{-}\gldim(R)$ is equivalent to that of $\gldim(R)$. 
	\end{proof}
	If take $n=0$ in Theorem \ref{finite=0}, we can recover the following classical result.
	\begin{corollary} Let $R$ be a ring. Then $\D^b_{sg}(R) = 0$ if and only if $\gldim(R)$ is finite.
	\end{corollary}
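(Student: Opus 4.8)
The plan is to obtain this as the special case $n = 0$ of Theorem~\ref{finite=0}, so the only real work is to verify that the $n = 0$ instances of the objects appearing there are the familiar classical ones. First I would recall, from the discussion following the definition of $n$-derived categories, that $\D^b_0(R)$ is precisely the ordinary bounded derived category $\D^b(R)$, and from Remark~\ref{str-fpr}(1) that the class $\fP_0$ of $0$-projective modules coincides with the class $\Proj$ of projective modules; hence $\K^b(\fP_0) = \K^b(\Proj)$ \emph{on the nose} (as subcategories of $\K(R)$, not merely up to equivalence). Substituting these identifications into the definition, the $0$-singularity category $\D^b_{0\mbox{-}sg}(R) = \D^b_0(R)/\K^b(\fP_0)$ is by definition the classical singularity category $\D^b_{sg}(R) = \D^b(R)/\K^b(\Proj)$.

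Second, I would invoke the remark recorded right after the definition of $n\mbox{-}\gldim$, namely that $0\mbox{-}\gldim(R) = \gldim(R)$. With these two substitutions, the equivalence asserted by Theorem~\ref{finite=0}, ``$\D^b_{n\mbox{-}sg}(R) = 0$ if and only if $n\mbox{-}\gldim(R)$ is finite,'' becomes verbatim ``$\D^b_{sg}(R) = 0$ if and only if $\gldim(R)$ is finite,'' which is exactly the claim; note that for $n = 0 \in \mathbb{N}$ the ``moreover'' part of Theorem~\ref{finite=0} is vacuously consistent, since there $\gldim(R)$ and $0\mbox{-}\gldim(R)$ are literally the same invariant.

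I do not anticipate any genuine obstacle here: the corollary is purely a matter of unwinding the $n = 0$ specialization of constructions already set up in the preliminaries, and every identification needed ($\D^b_0(R) = \D^b(R)$, $\fP_0 = \Proj$, $\D^b_{0\mbox{-}sg}(R) = \D^b_{sg}(R)$, $0\mbox{-}\gldim(R) = \gldim(R)$) is either immediate from the definitions or explicitly stated earlier in the paper. The only point worth a sentence of care is the equality $\K^b(\fP_0) = \K^b(\Proj)$, which is clear precisely because $\fP_0 = \Proj$ as full subcategories of $R$-$\Mod$.
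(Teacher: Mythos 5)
Your proposal is correct and matches the paper exactly: the corollary is obtained by taking $n=0$ in Theorem~\ref{finite=0}, using the identifications $\D^b_0(R)=\D^b(R)$, $\fP_0=\Proj$, and $0\mbox{-}\gldim(R)=\gldim(R)$, which is precisely what the paper does (its entire ``proof'' is the sentence preceding the corollary). Your extra care about $\K^b(\fP_0)=\K^b(\Proj)$ holding on the nose is a reasonable, if unnecessary, elaboration of the same argument.
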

	
\begin{proposition}\label{n-sg-equ} Assume that $\fP_n$ is closed under kernels of epimorphisms. Then $\D^b_{n\mbox{-}sg}(R)$  is triangulated equivalent to $\D^b_{sg}(R)$ for any $n\in\mathbb{N}$.
\end{proposition}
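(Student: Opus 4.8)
The plan is to exhibit both $\D^b_{n\mbox{-}sg}(R)$ and $\D^b_{sg}(R)$ as Verdier quotients of the single category $\D^b_{n}(R)$ and then to show that the two thick subcategories one quotients by coincide. First I would record that, since every $n$-exact complex is exact, the canonical functor $\K^b(R)\to\D^b(R)$ annihilates $\K^{b}_{n}(R)$, hence factors through $\D^b_{n}(R)=\K^b(R)/\K^{b}_{n}(R)$ and exhibits $\D^b(R)$ as the Verdier quotient $\D^b_{n}(R)/\mathcal{A}$, where $\mathcal{A}$ is the thick subcategory of $\D^b_{n}(R)$ generated by the images of the bounded acyclic complexes of $R$-modules. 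Dividing further by $\K^b(\Proj)$ yields a triangle equivalence $\D^b_{sg}(R)\simeq\D^b_{n}(R)/\langle\mathcal{A},\K^b(\Proj)\rangle$, where $\langle\ \rangle$ denotes the generated thick subcategory. Since $\D^b_{n\mbox{-}sg}(R)=\D^b_{n}(R)/\K^b(\fP_n)$ by definition, the whole proposition reduces to the equality $\langle\mathcal{A},\K^b(\Proj)\rangle=\K^b(\fP_n)$ of thick subcategories of $\D^b_{n}(R)$: a Verdier quotient depends only on the thick subcategory removed, so the equivalence then follows at once.

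For the inclusion $\langle\mathcal{A},\K^b(\Proj)\rangle\supseteq\K^b(\fP_n)$ I would argue as follows. Plainly $\K^b(\Proj)\subseteq\K^b(\fP_n)$. For a single $n$-projective module $P$, Proposition \ref{pd-fpd}(1) (applied with $n\mbox{-}\pd_{R}P=0$) gives $\pd_{R}P\le n$, so $P$ admits a finite projective resolution $0\to Q_k\to\cdots\to Q_0\to P\to 0$; the distinguished triangle in $\D^b_{n}(R)$ attached to the augmentation $[Q_k\to\cdots\to Q_0]\to P$, whose cone is the acyclic complex $[Q_k\to\cdots\to Q_0\to P]$ (an object of $\mathcal{A}$), then places the stalk $P$ in $\langle\mathcal{A},\K^b(\Proj)\rangle$. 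A brutal truncation induction on the number of nonzero terms upgrades this from stalk complexes to all of $\K^b(\fP_n)$.

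The substantive inclusion is $\langle\mathcal{A},\K^b(\Proj)\rangle\subseteq\K^b(\fP_n)$, and this is the only place the hypothesis is used. Since $\K^b(\Proj)\subseteq\K^b(\fP_n)$, it suffices to prove $\mathcal{A}\subseteq\K^b(\fP_n)$, i.e.\ that every bounded acyclic complex $A^{\bullet}$ is isomorphic in $\D^b_{n}(R)$ to a bounded complex of $n$-projectives. Using the equivalence $\D^b_{n}(R)\simeq\K^{-,\boldsymbol{nb}}(\fP_n)$ of \cite[Theorem 5.11]{ZZW}, I would replace $A^{\bullet}$ by an $n$-projective resolution $Q^{\bullet}\in\K^{-,\boldsymbol{nb}}(\fP_n)$. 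Because an $n$-quasi-isomorphism is in particular a quasi-isomorphism, $Q^{\bullet}\cong A^{\bullet}$ in $\D^b(R)$, so $Q^{\bullet}$ is acyclic. Here is the key point: running a downward induction starting from the top (highest-degree nonzero) term of the bounded-above complex $Q^{\bullet}$ and using acyclicity, each differential surjects onto the next syzygy, so the assumption that $\fP_n$ is closed under kernels of epimorphisms forces every syzygy $\Ker(d^{j}_{Q^{\bullet}})$ to lie in $\fP_n$. Consequently, for any $m\ge-\infn Q^{\bullet}$ (which is finite by the choice of $Q^{\bullet}$) the module $\Coker(d^{-m-1}_{Q^{\bullet}})\cong\Ker(d^{-m+1}_{Q^{\bullet}})$ is $n$-projective, whence Theorem \ref{c-fpn} gives $n\mbox{-}\pd_{R}Q^{\bullet}\le m<\infty$; by the same theorem $Q^{\bullet}$ splits, up to a contractible summand, as a bounded complex of $n$-projectives, and therefore so does $A^{\bullet}$ in $\D^b_{n}(R)$. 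This establishes $\mathcal{A}\subseteq\K^b(\fP_n)$, hence the desired equality, hence the triangle equivalence $\D^b_{n\mbox{-}sg}(R)\simeq\D^b_{sg}(R)$.

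I expect the main obstacle to be exactly the step just described: verifying that the $n$-projective resolution of an acyclic complex may be chosen acyclic, and then that closure of $\fP_n$ under kernels of epimorphisms genuinely propagates down the resolution so that all sufficiently deep syzygies become $n$-projective. The remaining ingredients — the factorization of $\D^b(R)$ through $\D^b_{n}(R)$, the formal manipulation of iterated Verdier quotients, and the reduction of stalk $n$-projectives via finite projective resolutions — should be routine given the apparatus of Section 3.
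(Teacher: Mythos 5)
Your argument is correct, and its skeleton is the same as the paper's: both proofs realize $\D^b_{sg}(R)$ and $\D^b_{n\mbox{-}sg}(R)$ as Verdier quotients of a single ambient category ($\D^b_{n}(R)$ for you, the equivalent model $\K^{-,\boldsymbol{nb}}(\fP_n)$ for the paper) and reduce everything to matching up the two thick subcategories being annihilated. The inclusion you treat as routine --- every bounded complex of $n$-projectives lies in the thick subcategory generated by $\K^b(\Proj)$ and the bounded acyclics --- is exactly what the paper extracts from Proposition \ref{pd-fpd} (each $P\in\fP_n$ has $\pd_RP\leq n$ when $n\in\mathbb{N}$, which is also why the statement excludes $n=\infty$). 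Where you genuinely diverge is the opposite inclusion: the paper outsources it wholesale to \cite[Theorem 6.6]{ZZW}, which asserts $\D^b(R)\simeq \K^{-,\boldsymbol{nb}}(\fP_n)/\K_{ac}^b(\fP_n)$ under the closure hypothesis, whereas you prove it with the tools of Section 3 --- replace a bounded acyclic complex by a bounded-above $n$-projective resolution $Q^{\bullet}$, note that $Q^{\bullet}$ is itself acyclic because $n$-quasi-isomorphisms are quasi-isomorphisms, run the downward syzygy induction from the top nonzero degree using closure of $\fP_n$ under kernels of epimorphisms, and finish with Theorem \ref{c-fpn}. Your route is more self-contained and isolates precisely where the hypothesis enters (only in making the deep syzygies of $Q^{\bullet}$ $n$-projective); the paper's is shorter but leans on an external result. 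Two small points worth making explicit in a write-up: condition (2) of Theorem \ref{c-fpn} quantifies over all $n$-projective resolutions, but any two are homotopy equivalent (as shown in the proof of $(1)\Rightarrow(2)$ there), so checking your single $Q^{\bullet}$ suffices; and the factorization $\D^b(R)\simeq\D^b_{n}(R)/\mathcal{A}$ you start from is exactly Theorem \ref{fpd-derived} applied to the pair $(0,n)$.
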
	
\begin{proof} It follows by \cite[Theorem 6.6]{ZZW} that $\D^b(R)\simeq \K^{-,\boldsymbol{nb}}(\fP_n)/\K_{ac}^b(\fP_n)$. Note that $\D^b_{sg}(R)\simeq \D^b(R)/\K^b(\fP_0)$ and $\D^b_{n\mbox{-}sg}(R) \simeq \K^{-,\boldsymbol{nb}}(\fP_n)/\K^b(\fP_n).$ It follows by Proposition \ref{pd-fpd} that  $\K^b(\fP_n)\simeq  \K^b(\fP_0)$ for any $n\in\mathbb{N}$.  Hence, we have $\D^b_{n\mbox{-}sg}(R)\simeq\D^b_{sg}(R)$ for any $n\in\mathbb{N}$.
\end{proof}	
\begin{remark}
	We do not know that whether the condition ``Assume that $\fP_n$ is closed under kernels of epimorphisms.'' in Proposition \ref{n-sg-equ} can be omitted.
\end{remark}

Recall the notion of recollement of triangulated categories, which is introduced by Beilinson, Bernstein
and Deligne \cite{Beilinson}. Let $\mathcal{T}', \mathcal{T}$ and $\mathcal{T}''$ be three triangulated  categories. We say that $\mathcal{T}$ admits a recollement relative to $\mathcal{T}'$ and $\mathcal{T}''$ , if there exist six triangulated functors as in the following diagram
$$\xymatrix{\mathcal{T}'\ar[rr]!R|{i_{\ast}}&&\ar@<-2ex>[ll]!R|{i^{\ast}}\ar@<2ex>[ll]!R|{i^{!}}\mathcal{T}
		\ar[rr]!L|{j^{\ast}}&&\ar@<-2ex>[ll]!L|{j_{!}}\ar@<2ex>[ll]!L|{j_{\ast}}\mathcal{T}''}$$
	satisfying the following items
	\begin{enumerate}
		\item $(i^{\ast},i_{\ast}), (i_{\ast}, i^!), (j_{!},j^{\ast})$ and $(j^{\ast},j_{\ast})$ are adjoint pairs;
		
		\item $i_{\ast}, j_{\ast}$ and $j_{!}$ are full embedding;
		
		\item $j^{\ast}i_{\ast} = 0$;
		
		\item for each $X\in \mathcal{T}$, there are distinguished triangles
		$$i_{\ast}i^!(X) \rightarrow X \rightarrow j_{\ast}j^{\ast}(X) \rightarrow i_{\ast}i^!(X)[1],$$
		$$j_!j^{\ast}(X) \rightarrow X \rightarrow i_{\ast}i^{\ast}(X) \rightarrow j_!j^{\ast}(X)[1].$$
	\end{enumerate}
	
	\begin{theorem}\label{main} Let $R, S$ and $T$ be rings. Assume that $\D^b_{n}(R)$ admits the following recollement
		$$\xymatrix{\D^b_{n}(S)\ar[rr]!R|{i_{\ast}}&&\ar@<-2ex>[ll]!R|{i^{\ast}}\ar@<2ex>[ll]!R|{i^{!}}\D^b_{n}(R)\ar[rr]!L|{j^{\ast}}&&\ar@<-2ex>[ll]!L|{j_{!}}\ar@<2ex>[ll]!L|{j_{\ast}}\D^b_{n}(T)}$$\\
		Then $\D^b_{n\mbox{-}sg}(R) = 0$ if and only if $\D^b_{n\mbox{-}sg}(S) = 0 = \D^b_{n\mbox{-}sg}(T)$. Consequently, $n\mbox{-}\gldim(R)$ is finite if and only if so are $n\mbox{-}\gldim(S)$ and $n\mbox{-}\gldim(T)$.
	\end{theorem}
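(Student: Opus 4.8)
The plan is to by-pass the naive strategy of descending the given recollement of $n$-derived categories to a recollement of $n$-singularity categories, since the abstract triangulated functors of the recollement need not carry $\K^b(\fP_n)$ into $\K^b(\fP_n)$. Instead I would reformulate the vanishing of $\D^b_{n\mbox{-}sg}$ as a property of $\D^b_n$ expressed purely through its Hom-groups, which the recollement transports by a formal argument. Concretely, the first step is to prove that for any ring $\Lambda$,
$$\D^b_{n\mbox{-}sg}(\Lambda)=0\quad\Longleftrightarrow\quad(\star_\Lambda)\colon\ \Hom_{\D^b_n(\Lambda)}(X^\bullet,Y^\bullet[i])=0\ \text{for}\ i\gg0,\ \text{for all}\ X^\bullet,Y^\bullet\in\D^b_n(\Lambda),$$
where the bound on $i$ may depend on the pair $(X^\bullet,Y^\bullet)$. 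The implication $(\Rightarrow)$ is the easy one: if $\D^b_{n\mbox{-}sg}(\Lambda)=0$ then $n\mbox{-}\gldim(\Lambda)<\infty$ by Theorem \ref{finite=0}, and the characterization of $n$-global dimension via complexes (the last theorem of Section 3) gives $\Hom_{\D^b_n(\Lambda)}(X^\bullet,Y^\bullet[i])=\next^i_\Lambda(X^\bullet,Y^\bullet)=0$ as soon as $i>n\mbox{-}\gldim(\Lambda)+\supn Y^\bullet-\infn X^\bullet$. For $(\Leftarrow)$ I would show that each $X^\bullet\in\D^b_n(\Lambda)$ has finite $n$-projective dimension: if not, then by Theorem \ref{c-fpn}(5) one may pick, for each large $m$, a module $N_m$ with $\next^{m+1}_\Lambda(X^\bullet,N_m)\neq0$; setting $N:=\prod_m N_m$ and using that $\next^i_\Lambda(X^\bullet,-)$ commutes with direct products (Hom does, and so does cohomology) yields $\next^i_\Lambda(X^\bullet,N)\neq0$ for arbitrarily large $i$, contradicting $(\star_\Lambda)$. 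Then Theorem \ref{c-fpn} shows every object of $\D^b_n(\Lambda)$ is isomorphic to one in $\K^b(\fP_n)$, so $\D^b_n(\Lambda)=\K^b(\fP_n)$ and $\D^b_{n\mbox{-}sg}(\Lambda)=0$.

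With this reformulation the theorem is a diagram chase inside the recollement. For ``only if'', assume $(\star_R)$; since $i_\ast$ is a full triangulated embedding, $\Hom_{\D^b_n(S)}(X^\bullet,Y^\bullet[i])\cong\Hom_{\D^b_n(R)}(i_\ast X^\bullet,(i_\ast Y^\bullet)[i])$, which is $0$ for $i\gg0$ by $(\star_R)$, so $(\star_S)$ holds; the same argument applied to the full embedding $j_\ast$ gives $(\star_T)$. For ``if'', assume $(\star_S)$ and $(\star_T)$ and fix $X^\bullet,Y^\bullet\in\D^b_n(R)$. Applying $\Hom_{\D^b_n(R)}(-,Y^\bullet[i])$ to the canonical triangle $j_!j^\ast X^\bullet\to X^\bullet\to i_\ast i^\ast X^\bullet\to (j_!j^\ast X^\bullet)[1]$ produces an exact sequence
$$\Hom_{\D^b_n(R)}(i_\ast i^\ast X^\bullet,Y^\bullet[i])\longrightarrow\Hom_{\D^b_n(R)}(X^\bullet,Y^\bullet[i])\longrightarrow\Hom_{\D^b_n(R)}(j_!j^\ast X^\bullet,Y^\bullet[i]),$$
so it suffices to kill the two outer terms for $i\gg0$. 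Using the adjunction $(i_\ast,i^!)$ and that $i^!$ is triangulated (hence commutes with shifts), the left term is $\Hom_{\D^b_n(S)}(i^\ast X^\bullet,(i^!Y^\bullet)[i])$, which vanishes for $i\gg0$ by $(\star_S)$; using the adjunction $(j_!,j^\ast)$, the right term is $\Hom_{\D^b_n(T)}(j^\ast X^\bullet,(j^\ast Y^\bullet)[i])$, which vanishes for $i\gg0$ by $(\star_T)$. Hence $(\star_R)$ holds. Finally, the ``consequently'' clause follows by running the equivalence $\D^b_{n\mbox{-}sg}(-)=0\Leftrightarrow n\mbox{-}\gldim(-)<\infty$ of Theorem \ref{finite=0} for $R$, $S$ and $T$.

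The step I expect to be the real obstacle is the implication $(\star_\Lambda)\Rightarrow\D^b_{n\mbox{-}sg}(\Lambda)=0$: condition $(\star_\Lambda)$ only supplies a vanishing bound depending on the chosen pair of objects, whereas $\D^b_{n\mbox{-}sg}(\Lambda)=0$ is a genuinely uniform statement (finiteness of $n\mbox{-}\gldim(\Lambda)$). The product trick above is what bridges this gap, and it is the only place where anything beyond a formal manipulation of triangles and adjunctions is needed; the rest of the argument is soft and should go through routinely.
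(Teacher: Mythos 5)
Your proposal is correct, but it takes a genuinely different route from the paper's. The paper works at the level of modules: it shows $\next^m_R(M,N)=0$ for $m\gg 0$ for $R$-modules $M,N$ by applying $\Hom_{\D^b_{n}(R)}(-,i_{\ast}i^!(N))$ and $\Hom_{\D^b_{n}(R)}(-,j_{\ast}j^{\ast}(N))$ to one canonical triangle and $\Hom_{\D^b_{n}(R)}(M,-)$ to the other, invoking $j^{\ast}i_{\ast}=0$ together with $\D^b_{n}(S)\simeq\K^b(\fP_n)\simeq\D^b_{n}(T)$, and then concludes $n\mbox{-}\gldim(R)<\infty$ and applies Theorem \ref{finite=0}. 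You instead transport a Hom-vanishing condition $(\star)$ on arbitrary pairs of complexes, which lets you use a single triangle and the two adjunctions $(i_{\ast},i^!)$ and $(j_{!},j^{\ast})$ --- a cleaner chase that never needs $j^{\ast}i_{\ast}=0$. More importantly, your product trick repairs a uniformity issue that the paper elides: the threshold beyond which the paper's $\next^m_R(M,N)$ vanishes depends on $\supn i^!(N)$ and $\supn j^{\ast}(N)$, hence on $N$, whereas Theorem \ref{c-fpn}(5) requires $\next^{m+1}_R(M,N)=0$ for \emph{all} $N$ at a single $m$ before one may conclude $n\mbox{-}\pd_RM\leq m$. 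Your reformulation confines all the uniformity to the one-ring lemma, where $N:=\prod_m N_m$ and the fact that $\next^i_\Lambda(X^{\bullet},-)=\H^i\Hom_\Lambda(P_{X^{\bullet}},-)$ commutes with direct products settle it; the recollement then only has to move the non-uniform condition $(\star)$ around, which is purely formal. Two points you should make explicit in a final write-up: (i) you use $\next^i_\Lambda(X^{\bullet},Y^{\bullet})\cong\Hom_{\D^b_{n}(\Lambda)}(X^{\bullet},Y^{\bullet}[i])$ for complexes, while Proposition \ref{n-ext} states it only for modules --- the same proof via \cite[Propositions 5.5 and 5.7]{ZZW} gives the general case; (ii) the criterion that $\D^b_{n\mbox{-}sg}(\Lambda)=0$ if and only if every object of $\D^b_{n}(\Lambda)$ is isomorphic to one of $\K^b(\fP_n)$, which is the standard vanishing test for a Verdier quotient by a thick subcategory and which your argument uses in both directions.
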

	\begin{proof} Since $\D^b_{n}(S)$ and $\D^b_{n}(T)$ can be fully embedded into 
		$\D^b_{n}(R)$ by the functors $i_{\ast}$ and $j^{\ast}$ respectively, we have
		the finiteness of $n\mbox{-}\gldim(R)$ implies the finiteness of both $n\mbox{-}\gldim(S)$ and $n\mbox{-}\gldim(T)$.  It follows by
		Theorem \ref{finite=0} that $\D^b_{n\mbox{-}sg}(R) = 0$ implies $\D^b_{n\mbox{-}sg}(S) = 0 =\D^b_{n\mbox{-}sg}(T)$.
		
		On the other hand, suppose $\D^b_{n\mbox{-}sg}(S) = 0 =\D^b_{n\mbox{-}sg}(T)$. Then $S$ and $T$ have finite $n$-global dimensions. Let $M$ and $N$ be any $R$-modules. The above recollement induces the
		following two distinguished triangles in $\D^b_{n}(R)$:
		$$j_!j^{\ast}(M) \rightarrow M \rightarrow i_{\ast}i^{\ast}(M) \rightarrow j_!j^{\ast}(M)[1],$$
		$$i_{\ast}i^!(N) \rightarrow N \rightarrow j_{\ast}j^{\ast}(N) \rightarrow i_{\ast}i^!(N)[1].$$
		By applying $\Hom_{\D^b_{n}(R)}(-,i_{\ast}i^!(N))$ and $\Hom_{\D^b_{n}(R)}(-, j_{\ast}j^{\ast}(N))$ to the first triangle, we
		get the following long exact sequences  for every $m \in \mathbb{Z}$:
		$$ \Hom_{\D^b_{n}(R)}(i_{\ast}i^{\ast},i_{\ast}i^!(N)[m]) \rightarrow \Hom_{\D^b_{n}(R)}(M,i_{\ast}i^!(N)[m]) \rightarrow \Hom_{\D^b_{n}(R)}(j_!j^{\ast},i_{\ast}i^!(N)[m]),$$
		$$\Hom_{\D^b_{n}(R)}(i_{\ast}i^{\ast}, j_{\ast}j^{\ast}(N)[m]) \rightarrow \Hom_{\D^b_{n}(R)}(M, j_{\ast}j^{\ast}(N)[m])\rightarrow\Hom_{\D^b_{n}(R)}(j_!j^{\ast}, j_{\ast}j^{\ast}(N)[m]).$$
		Since $j^{\ast}i_{\ast} = 0$, we have $$\Hom_{\D^b_{n}(R)}(j_!j^{\ast},i_{\ast}i^!(N)[m])\cong \Hom_{\D^b_{n}(T)}(j^{\ast}(M),j^{\ast}i_{\ast}i^!(N)[m])=0$$ for every $m \in \mathbb{Z}$; and similarly, $\Hom_{\D^b_{n}(R)}(i_{\ast}i^{\ast},j_{\ast}j^{\ast}(N)[m])=0$ follows by
		the adjunction $(j^{\ast}, j_{\ast})$. Since $n\mbox{-}\gldim(S) <\infty$, we have $\D^b_{n}(S)\simeq \K^b(\fP_n)$. Noting that $i^{\ast}(M)$ and $i^!(N)$ lie in $\K^b(\fP_n)$, one has $\Hom_{\D^b_{n}(R)}(i^{\ast}(M), i^{!}(N)[m])=0$ for $m\gg 0$; moreover, $i_{\ast}$ is a full embedding map and then $\Hom_{\D^b_{n}(R)}(i_{\ast}i^{\ast}(M),i_{\ast}i^!(N)[m])=0$ for $m\gg 0$. Similarly,   $\Hom_{\D^b_{n}(R)}(j_!i_{\ast}(M), j_{\ast}j^{\ast}(N)[m])=0$ for $m\gg 0$ since
		$n\mbox{-}\gldim(T) < \infty$.
		
		Now we apply $\Hom_{\D^b_{n}(R)}(M, -)$ to the second triangle, and obtain the following long
		exact sequence
		$$\Hom_{\D^b_{n}(R)}(M,i_{\ast}i^!(N)[m]) \rightarrow \Hom_{\D^b_{n}(R)}(M,N[m])\rightarrow \Hom_{\D^b_{n}(R)}(M,j_{\ast}j^{\ast}(N)[m]).$$
		By the above argument, we have $$\Hom_{\D^b_{n}(R)}(M,i_{\ast}i^!(N)[m]) = 0 = \Hom_{\D^b_{n}(R)}(M,j_{\ast}j^{\ast}(N)[m])$$ for $m\gg 0$. Then $\next^n_R(M, N)\cong \Hom_{\D^b_{n}(R)}(M,N[m]) = 0$ by Proposition \ref{n-ext}. This implies
		that $n\mbox{-}\gldim(R) < \infty$. And hence  $\D^b_{n\mbox{-}sg}(R) = 0$  by Theorem \ref{finite=0}. 
	\end{proof}
	
	If we take $n=0$ in Theorem \ref{main}, we can  get the following  result.
	\begin{corollary}  Let $R, S$ and $T$ be rings. Assume that $\D^b(R)$ admits the following recollement
		$$\xymatrix{\D^b(S)\ar[rr]!R|{i_{\ast}}&&\ar@<-2ex>[ll]!R|{i^{\ast}}\ar@<2ex>[ll]!R|{i^{!}}\D^b(R)\ar[rr]!L|{j^{\ast}}&&\ar@<-2ex>[ll]!L|{j_{!}}\ar@<2ex>[ll]!L|{j_{\ast}}\D^b(T)}$$\\
		Then $\D^b_{sg}(R) = 0$ if and only if $\D^b_{sg}(S) = 0 = \D^b_{sg}(T)$.  Consequently, $\gldim(R)$ is finite if and only if so are $\gldim(S)$ and $\gldim(T)$.
	\end{corollary}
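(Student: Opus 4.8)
The plan is to transfer the question, via Theorem~\ref{finite=0}, from $n$-singularity categories to $n$-global dimensions, and then to run a pair of diagram chases through the two defining triangles of the recollement. Since Theorem~\ref{finite=0} gives $\D^b_{n\mbox{-}sg}(A)=0\Longleftrightarrow n\mbox{-}\gldim(A)<\infty$ for any ring $A$, it suffices to prove that $n\mbox{-}\gldim(R)<\infty$ if and only if $n\mbox{-}\gldim(S)<\infty$ and $n\mbox{-}\gldim(T)<\infty$; the last ``consequently'' clause is then automatic. I will use two observations. First, if $n\mbox{-}\gldim(A)<\infty$, then by Theorem~\ref{c-fpn} every object of $\D^b_n(A)$ has a bounded $n$-projective resolution, so $\D^b_n(A)\simeq\K^b(\fP_n)$. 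Second, for fixed bounded complexes $U^{\bullet},V^{\bullet}$ of $n$-projective modules one has $\Hom_{\K(R)}(U^{\bullet},V^{\bullet}[m])=0$ for all $m\gg 0$, simply by a degree count on chain maps. Finally, by Proposition~\ref{n-ext}, $\next^m_A(M,N)\cong\Hom_{\D^b_n(A)}(M,N[m])$ for $A$-modules $M,N$, and $n\mbox{-}\gldim(A)<\infty$ is equivalent to every $A$-module having finite $n$-projective dimension (for the nontrivial direction, apply the pointwise statement to an infinite direct sum of modules of unbounded $n$-projective dimension, noting that a direct summand does not increase $n$-projective dimension).

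For the forward implication, suppose $n\mbox{-}\gldim(R)<\infty$. Since $i_{\ast}$ and $j_{\ast}$ are full embeddings, for any $S$-modules $M,N$ we get $\next^m_S(M,N)\cong\Hom_{\D^b_n(S)}(M,N[m])\cong\Hom_{\D^b_n(R)}(i_{\ast}M,i_{\ast}N[m])$, which vanishes for $m\gg 0$ by the two observations applied to $R$. Hence every $S$-module has finite $n$-projective dimension and $n\mbox{-}\gldim(S)<\infty$; the identical argument with $j_{\ast}$ in place of $i_{\ast}$ gives $n\mbox{-}\gldim(T)<\infty$.

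For the reverse implication --- the main point --- assume $n\mbox{-}\gldim(S)<\infty$ and $n\mbox{-}\gldim(T)<\infty$, so $\D^b_n(S)$ and $\D^b_n(T)$ are both $\simeq\K^b(\fP_n)$. Fix $R$-modules $M,N$ and the two triangles of the recollement,
$$j_{!}j^{\ast}(M)\longrightarrow M\longrightarrow i_{\ast}i^{\ast}(M)\longrightarrow j_{!}j^{\ast}(M)[1],\qquad i_{\ast}i^{!}(N)\longrightarrow N\longrightarrow j_{\ast}j^{\ast}(N)\longrightarrow i_{\ast}i^{!}(N)[1].$$
Applying $\Hom_{\D^b_n(R)}(M,-)$ to the second triangle reduces the vanishing of $\Hom_{\D^b_n(R)}(M,N[m])$ for $m\gg 0$ to that of $\Hom_{\D^b_n(R)}(M,i_{\ast}i^{!}(N)[m])$ and $\Hom_{\D^b_n(R)}(M,j_{\ast}j^{\ast}(N)[m])$; applying $\Hom_{\D^b_n(R)}(-,i_{\ast}i^{!}(N)[m])$ and $\Hom_{\D^b_n(R)}(-,j_{\ast}j^{\ast}(N)[m])$ to the first triangle reduces each of those to two terms. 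The ``mixed'' terms $\Hom_{\D^b_n(R)}(j_{!}j^{\ast}(M),i_{\ast}i^{!}(N)[m])$ and $\Hom_{\D^b_n(R)}(i_{\ast}i^{\ast}(M),j_{\ast}j^{\ast}(N)[m])$ vanish for every $m$: by the adjunctions $(j_{!},j^{\ast})$ and $(j^{\ast},j_{\ast})$ they become $\Hom$-groups in $\D^b_n(T)$ carrying a factor $j^{\ast}i_{\ast}=0$. The two ``pure'' terms are $\Hom_{\D^b_n(R)}(i_{\ast}i^{\ast}(M),i_{\ast}i^{!}(N)[m])\cong\Hom_{\D^b_n(S)}(i^{\ast}M,i^{!}N[m])$ (as $i_{\ast}$ is fully faithful) and $\Hom_{\D^b_n(R)}(j_{!}j^{\ast}(M),j_{\ast}j^{\ast}(N)[m])\cong\Hom_{\D^b_n(T)}(j^{\ast}M,j^{\ast}N[m])$ (using $(j_{!},j^{\ast})$ together with $j^{\ast}j_{\ast}\cong\Id$); both vanish for $m\gg 0$ by the second observation, applied to $S$ and $T$ respectively, since $i^{\ast}M,i^{!}N,j^{\ast}M,j^{\ast}N$ are now bounded complexes of $n$-projectives. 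Tracing back through the triangles, $\next^m_R(M,N)\cong\Hom_{\D^b_n(R)}(M,N[m])=0$ for $m\gg 0$; hence every $R$-module has finite $n$-projective dimension, so $n\mbox{-}\gldim(R)<\infty$ and $\D^b_{n\mbox{-}sg}(R)=0$ by Theorem~\ref{finite=0}.

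I expect the real work to be the bookkeeping in the last paragraph: one must choose the order in which the two triangles are hit with $\Hom$-functors so that every ``mixed'' term is annihilated outright by $j^{\ast}i_{\ast}=0$, leaving exactly the two ``pure'' terms --- one in $\D^b_n(S)$ and one in $\D^b_n(T)$ --- where the finiteness hypotheses are used, together with verifying the auxiliary identity $j^{\ast}j_{\ast}\cong\Id$ coming from $(j^{\ast},j_{\ast})$ being an adjunction with $j_{\ast}$ fully faithful. One should also keep in mind that the threshold ``$m\gg 0$'' is allowed to depend on $M$ and $N$, which is harmless because the resulting pointwise finiteness of $n$-projective dimension already forces $n\mbox{-}\gldim(R)<\infty$.
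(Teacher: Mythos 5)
Your proof is correct and follows essentially the same route as the paper: the paper obtains this corollary by setting $n=0$ in Theorem \ref{main}, and your argument --- reduction to global dimension via Theorem \ref{finite=0}, then the two-triangle diagram chase in which the mixed terms vanish by $j^{\ast}i_{\ast}=0$ together with the adjunctions and the pure terms vanish because finite global dimension of $S$ and $T$ forces $\D^b(S)\simeq\K^b(\fP_0)\simeq\D^b(T)$ --- is precisely the proof of that theorem. The one point you treat more explicitly than the paper is the passage from the pointwise vanishing $\Hom_{\D^b(R)}(M,N[m])=0$ for $m\gg 0$ to the finiteness of $\gldim(R)$ via the direct-sum trick.
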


\begin{acknowledgement}
The third author was supported by the NSF of China (No. 12271292).
\end{acknowledgement}

\end{document}